\definecolor{celadon}{rgb}{0.67, 0.88, 0.69}
\colorlet{kollane}{LightGoldenrod1}
\colorlet{roheline}{celadon}
\colorlet{sinine}{LightSkyBlue2}
\colorlet{lilla}{violet}
\colorlet{hall}{lightgray}
\newtheorem{thm}{Theorem}[section]
\newtheorem{theorem}[thm]{Theorem}
\newtheorem{lem}[thm]{Lemma}
\newtheorem{prop}[thm]{Proposition}
\newtheorem{proposition}[thm]{Proposition}
\newtheorem*{Claim}{Claim}
\newtheorem{fact}{Fact}[section]
\theoremstyle{definition}
\newtheorem{definition}{Definition}[section]
\newtheorem{defn}[definition]{Definition}
\theoremstyle{remark}
\newtheorem{remark}{Remark}[section]
\newtheorem{rem}[remark]{Remark}
\numberwithin{equation}{section}
\newcommand{\eps}{\varepsilon}
\newcommand{\N}{\mathbb{N}}
\newcommand{\R}{\mathbb{R}}
\newcommand{\Xs}{X^\ast}
\newcommand{\xs}{x^\ast}
\newcommand{\Xss}{X^{\ast\ast}}
\newcommand{\xss}{x^{\ast\ast}}
\newcommand{\ys}{y^\ast}
\newcommand{\yss}{y^{\ast\ast}}
\DeclareMathOperator{\linspan}{span}
\DeclareMathOperator{\ran}{ran}
\DeclareFontFamily{U}{matha}{\hyphenchar\font45}
\DeclareFontShape{U}{matha}{m}{n}{
      <5> <6> <7> <8> <9> <10> gen * matha
      <10.95> matha10 <12> <14.4> <17.28> <20.74> <24.88> matha12
      }{}
\DeclareSymbolFont{matha}{U}{matha}{m}{n}
\DeclareFontFamily{U}{mathx}{\hyphenchar\font45}
\DeclareFontShape{U}{mathx}{m}{n}{
      <5> <6> <7> <8> <9> <10>
      <10.95> <12> <14.4> <17.28> <20.74> <24.88>
      mathx10
      }{}
\DeclareSymbolFont{mathx}{U}{mathx}{m}{n}
\DeclareMathDelimiter{\vvvert}{0}{matha}{"7E}{mathx}{"17}
\newcommand{\Vvert}{\vvvert}
\begin{document}

\title[Strictly convex renormings and the diameter $2$ property]%
{Strictly convex renormings and the diameter $2$ property}
\author[Nygaard]{Olav Nygaard}
\author[P\~{o}ldvere]{M\"{a}rt P\~{o}ldvere}
\author[Troyanski]{Stanimir Troyanski}
\author[Viil]{Tauri Viil}

\address{Department of Mathematics, University of Agder, Servicebox 422,
4604 Kristiansand, Norway.}
\email{olav.dovland@uia.no}

\address{Institute of Mathematics and Statistics, University of Tartu, Narva mnt~18,
51009 Tartu, Estonia.}
\email{mart.poldvere@ut.ee, tauri.viil@gmail.com}

\address{Institute of Mathematics and Informatics,
Bulgarian Academy of Science, bl.~8,
acad. G. Bonchev str., 1113 Sofia, Bulgaria,
and
Departamento de Matem\'{a}ticas, Universidad de Murcia,
Campus de Espinardo, 30100 Espinardo (Murcia), Spain
}
\email{stroya@um.es}
\thanks{The second named author was supported by the Estonian Research Council grant PRG1901.}
\thanks{The third named author was supported by  
Bulgarian National Scientific Fund, Grant, KP–06–H22/4, 04.12.2018, 
and by grant PID2021-122126NB-C32 of Agencia Estatal de Investigaci\'{o}n and EDRF/FEDER  “A way of making Europe”    
(MCIN/AEI 10.13039/501100011033)}
\subjclass[2010]{46B20; 46B22}%
\keywords{Diameter 2 property, strictly convex norm, URED norm.}%
%
\begin{abstract}
A Banach space (or its norm) is said to have the diameter~$2$ property (D$2$P in short)
if every nonempty relatively weakly open subset of its closed unit ball has diameter~$2$.
We construct an equivalent norm on $L_1[0,1]$ which is weakly midpoint locally uniformly rotund and has the D$2$P.
We also prove that for Banach spaces admitting a norm-one finite-co-dimensional projection
it is impossible to be uniformly rotund in every direction and at the same time  have the D$2$P.
\end{abstract}
\maketitle

\section{Introduction}
The purpose of this paper is to study the implications of the ``big weakly open set in the unit ball phenomena''
on rotundity properties of the norm.

\begin{defn} \label{defn:diameter 2 properties}
A Banach space $X$ (or its norm) has the \emph{diameter $2$ property} (D$2$P for short)
if every nonempty relatively weakly open subset of its closed unit ball $B_X$ has diameter $2$.
\end{defn}

Intuitively, the D$2$P should prevent a Banach space from having ``too good'' rotundity properties.
For instance, in \cite[Theorem 2.7]{OST}, the existence of strictly convex equivalent norms on a Banach space
has been characterised in terms of the existence of countably many families of slices
which separate the points of its unit sphere.

\begin{defn}\label{defn:MLUR}
A Banach space $X$ (or its norm) is \emph{midpoint locally uniformly rotund} (MLUR for short)
(resp. \emph{weakly midpoint locally uniformly rotund} (wMLUR for short))
if every $x$ in the unit sphere $S_X$ of $X$ is a strongly extreme point
(resp. strongly extreme point in the weak topology) of $B_X$,
i.e., for every sequence $(x_n)_{n=1}^\infty$ in $X$ such that $\|x \pm x_n\|\xrightarrow[n\to\infty]{}1$,
one has $x_n\xrightarrow[n\to\infty]{}0$ in norm (resp. weakly).
\end{defn}

It is not obvious that wMLUR spaces (and even strictly convex spaces) with the D$2$P exist, but indeed they do.
The quotient $C(\mathbb{T})/A$,
where $C(\mathbb{T})$ is the space of continuous functions on the complex unit circle $\mathbb{T}$
and $A$ is the disc algebra,
is such an example
(this space is wMLUR and is $M$-embedded, see \cite[pages 167--168, Remark IV.1.17]{HWW}).
In \cite{AHNTT} an equivalent MLUR norm on $C[0,1]$ with the D$2$P was constructed.
An open question, already posed by V.~Kadets in \cite[Section~5]{K},
is whether there exists a strictly convex Banach space with the Daugavet property.

Clearly Banach spaces with the Radon--Nikod\'{y}m property fail to have the D$2$P.
The space $L_1[0,1]$ has the analytic Radon--Nikod\'{y}m property and the D$2$P.
In this paper, we construct an equivalent strictly convex norm with the D$2$P on $L_1[0,1]$.
To our knowledge, this is the first example of a strictly convex Banach space with the D$2$P
which does not have subspaces isomorphic to $c_0$.
Namely, we define an equivalent norm $\Vvert\cdot\Vvert$ on $L_1[0,1]$ by the formula
\begin{equation}\label{L1-renorming-formula}
\Vvert f \Vvert = \left(\sum_{k=0}^\infty 4^{-k} \sum_{j=1}^{2^k}\|f\|_{k,j}^2\right)^{\frac{1}{2}}
\end{equation}
where, setting $I_j^k=\bigl[\frac{j-1}{2^k},\frac{j}{2^k}\bigr)$
and letting $\lambda$ be the Lebesgue measure on~$[0,1]$,
\begin{equation*}
\| f \|_{k,j} = \int_{I_j^k}|f|\,d\lambda.
\end{equation*}

Our main result (Propositions~\ref{prop:rommet_har_D2P} and \ref{prop:Rommet_er_wMLUR} in tandem) is the following theorem.

\begin{thm}\label{thm: (L_1[0,1],|||.|||) is wMLUR and has the D2P}
The Banach space $(L_1[0,1],\Vvert\cdot\Vvert)$ is wMLUR and has the D$2$P.
\end{thm}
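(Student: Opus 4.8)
The plan is to establish the two properties separately, exactly as the decomposition into Propositions~\ref{prop:rommet_har_D2P} and \ref{prop:Rommet_er_wMLUR} suggests. I would first observe that the norm $\Vvert\cdot\Vvert$ is built as a weighted $\ell_2$-sum of the seminorms $f\mapsto\|f\|_{k,j}=\int_{I_j^k}|f|\,d\lambda$, each of which is just the $L_1$-mass of $f$ on a dyadic interval. The crucial structural feature is that for each fixed level $k$, the intervals $I_1^k,\dots,I_{2^k}^k$ partition $[0,1]$, so $\sum_{j=1}^{2^k}\|f\|_{k,j}=\|f\|_1$ is the original $L_1$-norm; the weights $4^{-k}$ are summable, and the $k=0$ term alone gives $\Vvert f\Vvert\ge\|f\|_1$, while Cauchy--Schwarz on each level gives an upper bound of the form $\Vvert f\Vvert\le C\|f\|_1$. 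This shows $\Vvert\cdot\Vvert$ is an equivalent norm; I would record the precise equivalence constants early since they will be needed to translate statements about the new unit ball back into the geometry of $L_1$.

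For the D$2$P I would argue directly from the definition: take a nonempty relatively weakly open subset $W$ of the new unit ball and a point $f\in W$; I must produce, for every $\eps>0$, two points of $W$ at distance (in $\Vvert\cdot\Vvert$) arbitrarily close to $2$. A basic weakly open neighbourhood is cut out by finitely many functionals $g_1,\dots,g_m\in L_\infty[0,1]=L_1^\ast$. The standard mechanism for the D$2$P in $L_1$ is that one can perturb $f$ by functions supported on a set of arbitrarily small measure where all the $g_i$ are essentially constant, so the functional values barely move while the perturbation carries nearly full norm; concretely I would choose a dyadic subinterval $I$ of small measure on which each $g_i$ is (up to $\eps$) constant, and form $f_{\pm}=f\pm h$ where $h$ is supported on $I$, has the right sign pattern to keep $\Vvert f_\pm\Vvert\le1$, and has large $\Vvert h\Vvert$. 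Because $h$ lives on a single small dyadic interval, only finitely many seminorms $\|h\|_{k,j}$ with $I_j^k\supseteq I$ are nonzero, and on the many deep levels the $4^{-k}$ weights combined with Cauchy--Schwarz let $\Vvert h\Vvert$ approach $\|h\|_1$; this is what forces $\Vvert f_+-f_-\Vvert=2\Vvert h\Vvert$ close to $2$ while both points stay in $W\cap B$.

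For the wMLUR property I would unwind Definition~\ref{defn:MLUR}: fix $f\in S_{(L_1,\Vvert\cdot\Vvert)}$ and a sequence $(f_n)$ with $\Vvert f\pm f_n\Vvert\to1$, and show $f_n\to0$ weakly. Writing $\Vvert\cdot\Vvert$ as the $\ell_2$-sum, the parallelogram-type convexity of the square function forces, level by level, that the vectors of seminorm-values $(\|f\pm f_n\|_{k,j})_{k,j}$ behave almost additively; squaring and using $2(a^2+b^2)-(a+b)^2=(a-b)^2$ I would extract that $\sum_{k,j}4^{-k}\bigl(\|f+f_n\|_{k,j}-\|f-f_n\|_{k,j}\bigr)^2\to0$, and hence for each individual $(k,j)$ that $\int_{I_j^k}|f+f_n|-\int_{I_j^k}|f-f_n|\to0$. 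The point is that these local $L_1$-differences controlling the signed mass of $f_n$ on every dyadic interval, together with the assumption that $f$ has nonvanishing ``local'' contribution, pin down the weak limit; since weak convergence in $L_1$ is tested against $L_\infty$ and the indicators of dyadic intervals are weak$^\ast$-dense enough to separate, I would conclude $\int_{I_j^k}f_n\to0$ for all $k,j$ and upgrade this to $f_n\to0$ weakly. The main obstacle, and where I would expect to spend the most care, is this last upgrade: controlling only the differences of absolute-value integrals is weaker than controlling $\int f_n$ itself, so I must exploit strict convexity of the square together with the strict positivity of $f$'s local masses to rule out oscillatory $f_n$ that keep $\|f\pm f_n\|_{k,j}$ balanced without tending to zero — this is precisely the step where the squared (rather than linear) aggregation of the local $L_1$-norms does the work, and verifying it rigorously is the technical heart of the argument.
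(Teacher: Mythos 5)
Your decomposition into a D$2$P half and a wMLUR half matches the paper's, and your norm-equivalence remark is fine, but each half of your argument has a genuine gap.

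The D$2$P argument fails at its core mechanism. You propose $f_\pm=f\pm h$ with $h$ carried by a small dyadic interval $I$ and $\Vvert h\Vvert$ close to $1$. This is impossible: since $\Vvert\cdot\Vvert\ge\|\cdot\|_1$ and the $L_1$-norm is additive over disjointly supported parts, if $\Vvert f\pm h\Vvert\le1$ then $1\ge\|f\pm h\|_1\ge\|f\|_1+\|h\|_1-2\int_I|f|\,d\lambda$, so $\|h\|_1\le1-\|f\|_1+2\int_I|f|\,d\lambda$. But $\Vvert f\Vvert=1$ forces $\|f\|_1\ge1/\sqrt2$, and $\int_I|f|\,d\lambda$ is small, hence $\Vvert f_+-f_-\Vvert=2\Vvert h\Vvert\le2\sqrt2\,\|h\|_1\le2\sqrt2-2+o(1)<1$: the two points you produce stay at distance bounded away from $2$, no matter what sign pattern $h$ has. (Your auxiliary claim that deep levels make $\Vvert h\Vvert\approx\|h\|_1$ is also backwards: if $h$ is concentrated on a single level-$K$ dyadic interval, then \emph{every} coarser seminorm equals $\|h\|_1$, so $\Vvert h\Vvert^2\approx\tfrac43\|h\|_1^2$.) In $L_1$-type norms, diameter-$2$ behaviour cannot come from \emph{adding} mass on small sets; the paper instead \emph{redistributes} the mass of $f$: with $b_j=\int_{I_j^K}f^+\,d\lambda$ and $c_j=\int_{I_j^K}f^-\,d\lambda$, it builds $f_1,f_2$ by placing these masses as spikes on four pairwise disjoint quarters of each $I_j^K$. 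Then $f_1,f_2$ have exactly the same seminorms $\|\cdot\|_{k,j}$ as $f$ for all $k\le K$ (so $\Vvert f_i\Vvert^2\le1+2^{-K}$) and the same integrals against level-$K$ step functions (so they stay in the weak neighbourhood), yet $f_1$ and $f_2$ are disjointly supported from each other, giving $\|f_1-f_2\|_{k,j}=2\|f\|_{k,j}$ for $k\le K$ and hence $\Vvert f_1-f_2\Vvert^2\ge4(1-2^{-K})$.

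For wMLUR, your skeleton (level-wise convexity, then strict convexity) is the paper's, but the decisive step is missing and your proposed bridge is false. First, $\int_{I_j^k}|f\pm f_n|\,d\lambda\to\int_{I_j^k}|f|\,d\lambda$ does not yield $\int_{I_j^k}f_n\,d\lambda\to0$ when $f$ changes sign inside $I_j^k$; the best direct bound is $\limsup_n\bigl|\int_{I_j^k}f_n\,d\lambda\bigr|\le2\min\bigl\{\int_{I_j^k}f^+\,d\lambda,\int_{I_j^k}f^-\,d\lambda\bigr\}$. Second, and more seriously, even granting $\int_{I_j^k}f_n\,d\lambda\to0$ for every dyadic interval, this does not imply $f_n\to0$ weakly for a bounded sequence in $L_1$: take $u_n=n{\bf 1}_{[\alpha,\alpha+1/n]}-n{\bf 1}_{[\alpha-1/n,\alpha]}$ with $\alpha$ irrational; then $\int_{I_j^k}u_n\,d\lambda\to0$ for all $k,j$, yet $u_n\not\to0$ weakly (test against ${\bf 1}_{[\alpha,1]}-{\bf 1}_{[0,\alpha)}$). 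No strict-convexity or separation argument can repair this, because the obstruction is lack of weak compactness, not lack of separating functionals. What the hypothesis $\Vvert f\pm f_n\Vvert\to1$ gives beyond the dyadic data --- and what the paper extracts --- is \emph{equi-integrability} of $(f_n)$: if it failed, splitting the $k=0$ seminorm over the bad sets $A_n$ and their complements would give $\tfrac12\bigl(\|f+g_n\|_{0,1}+\|f-g_n\|_{0,1}\bigr)\ge\|f\|_{0,1}+\eps$ for large $n$, contradicting the convexity fact. The Dunford--Pettis theorem then makes $(f_n)$ relatively weakly compact, and weak lower semicontinuity of $\Vvert\cdot\Vvert$ together with strict convexity of $\Vvert\cdot\Vvert$ (so that $f$ is an extreme point of the ball) forces every weak cluster point of $(f_n)$ to be $0$. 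Without this equi-integrability/Dunford--Pettis step your argument cannot close.
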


\begin{defn}\label{defn: URED}
A Banach space $X$ (or its norm) is \emph{uniformly rotund in every direction} (URED for short)
if whenever $(x_n)_{n=1}^\infty$ and $(y_n)_{n=1}^\infty$ are two sequences in $B_X$
such that there are $z\in X$ and scalars $r_n$, $n=1,2,\dotsc$, with $x_n-y_n = r_nz$ for every $n\in\N$,
and $\| x_n + y_n \|\xrightarrow[n\to\infty]{}2$,
one has $\| x_n - y_n \|\xrightarrow[n\to\infty]{}0$.
\end{defn}

It is natural to ask whether there exist URED Banach spaces with the D$2$P.
This question remains open, but we have the following result towards the negative
from which \cite[Proposition~2.10]{ALNT} is a corollary.

\begin{thm}\label{thm: norm-one projection with finite-co-dimensional range, URED, and D2P}
Let $X$ be a Banach space on which there exists a norm-one projection $P$
with finite-co-dimensional range.
If $X$ has the D$2$P, then $X$ is not URED and $\Xss$ is not strictly convex.
Moreover, whenever $0<\delta<1$, the unit sphere of $\Xss$ contains a line segment of length $1-\delta$.
\end{thm}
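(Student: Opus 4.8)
The plan is to exploit the finite-dimensional kernel $F:=\Ker P$ together with the one-sided rotundity coming from $\norm{P}=1$. Writing $Q:=I-P$, we have $X=Y\oplus F$ with $Y:=\ran P$ finite-codimensional and $F$ finite-dimensional, and the norm-one condition $\norm{Px}\le\norm{x}$ yields the orthogonality
\[
\norm{y+f}\ge\norm{y}\qquad(y\in Y,\ f\in F),
\]
which passes to the bidual through $P^{\ast\ast}$, a norm-one projection with range $Y^{\ast\ast}$ and kernel the weak$^\ast$-closed finite-dimensional space $F$. Fix a unit vector $v\in F$ and a number $s\in(0,1)$; the target is a segment of length $s$ in $S_{\Xss}$, and since $s$ is arbitrary in $(0,1)$ this yields both the non-strict-convexity of $\Xss$ and the length $1-\delta$ assertion. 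The key observation is that, $Q$ being of finite rank, it is weak-to-norm continuous, so the level sets of $x\mapsto\norm{Qx-sv}$ are relatively weakly open in $B_X$; this is the single place where the D$2$P enters.

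The crux is a \emph{room lemma}: for every $\theta\in(0,1)$ there is $x\in B_X$ with $Qx$ arbitrarily close to $sv$ and $\norm{Px}>1-\theta$. I would prove it by contradiction. If, for some $\rho>0$, every $x$ in the nonempty relatively weakly open set $U:=\set{x\in B_X:\norm{Qx-sv}<\rho}$ satisfied $\norm{Px}\le1-\theta$, then for $x,x'\in U$ one would get $\norm{x-x'}\le\norm{Px}+\norm{Px'}+\norm{Qx-sv}+\norm{Qx'-sv}<2(1-\theta)+2\rho$, so $\operatorname{diam}U<2$ once $\rho<\theta/2$; but $sv\in U$, contradicting the D$2$P. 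This produces $y_n:=Px_n\in B_Y$ with $\norm{y_n}\to1$ and $\norm{y_n+sv}\le\norm{x_n}+\norm{Qx_n-sv}\le 1+\rho_n\to1$.

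From these ``almost flat'' centres the failure of URED is immediate and requires no bidual. After the harmless rescaling $\tilde y_n:=y_n/(1+\rho_n)$, put $a_n:=\tilde y_n+\tfrac{s}{1+\rho_n}v$ and $b_n:=\tilde y_n$, so that $a_n,b_n\in B_X$; the difference $a_n-b_n=\tfrac{s}{1+\rho_n}v$ is a fixed multiple of $v$ with $\norm{a_n-b_n}\to s>0$, while the orthogonality gives $\norm{a_n+b_n}\ge 2\norm{\tilde y_n}\to2$. Hence $X$ is not URED. For the bidual segment I would pass to a weak$^\ast$-cluster point: along a subnet $b_n\xrightarrow{w^\ast}\eta\in Y^{\ast\ast}$ and $a_n\xrightarrow{w^\ast}\eta+sv$, and weak$^\ast$-lower-semicontinuity of the norm together with the orthogonality forces $[\eta,\eta+sv]\subseteq B_{\Xss}$ with $\norm{\eta+tv}\le1$ for all $t\in[0,s]$.

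The hard part will be placing this limiting segment \emph{on} the sphere, i.e.\ guaranteeing $\norm{\eta}=1$, since a bare weak$^\ast$-limit of norm-one vectors can lose norm. To force it I would run the room lemma inside a slice: choosing $h\in S_{\Xs}\cap F^{\perp}$ (so that $h(x)=h(Px)$ and $\norm{h|_{Y}}=1$) and applying the D$2$P to the relatively weakly open set $\set{x\in B_X:\ h(x)>1-\eps,\ \norm{Qx-sv}<\rho}$, any weak$^\ast$-cluster point $A$ of such points satisfies $A(h)=1$, hence $\norm{A}=1$, while $Q^{\ast\ast}A=sv$; writing $A=\eta+sv$ and using $h\in F^{\perp}$ gives $\eta(h)=1$, so $\norm{\eta}=1$ and $[\eta,\eta+sv]\subseteq S_{\Xss}$ is a genuine segment of length $s$. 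The delicate step is the \emph{nonemptiness} of this sliced set for a \emph{fixed} $h$: the norming functional delivered by the room lemma a priori depends on the approximating point, so $h$ must be selected beforehand as a functional exposing the appropriate face of the compact convex set $\overline{Q(B_X)}\subseteq F$. This is precisely where the finite-dimensionality of $F$ is indispensable, allowing one to extract from the room lemma a single functional whose maximal face reaches the prescribed value $sv$.
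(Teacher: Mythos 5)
Your ``room lemma'' and the deduction that $X$ is not URED are correct, and that half of your argument is even more self-contained than the paper's (which derives the URED statement, too, from a recursive construction resting on \cite[Proposition 2.7]{ALNT}, whereas you get it directly from weak-to-norm continuity of the finite-rank projection $Q$ and the D$2$P applied to the sets $\{x\in B_X:\|Qx-sv\|<\rho\}$). Your reduction of the bidual statement is also set up correctly: \emph{if} one can fix a single $h\in S_{\Xs}\cap F^{\perp}$ such that the sets $\{x\in B_X: h(x)>1-\eps,\ \|Qx-sv\|<\rho\}$ are nonempty for all $\eps,\rho>0$, then your cluster-point argument does place the segment $[\eta,\eta+sv]$ on $S_{\Xss}$.

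However, that nonemptiness step is a genuine gap, and your proposed resolution cannot work as stated; since both ``$\Xss$ not strictly convex'' and the length-$(1-\delta)$ segment depend on it, only the URED part of the theorem is actually proved. You propose to choose $h$ as a functional ``exposing the appropriate face of the compact convex set $\overline{Q(B_X)}\subseteq F$''; but $h$ is required to lie in $F^{\perp}$, so it vanishes identically on $F$ and in particular on $\overline{Q(B_X)}$ --- it carries no information about that set, and the finite-dimensionality of $F$ gives no handle on the behaviour of $h$ on $\ran P$, which is where the difficulty lives. The D$2$P cannot help either: it says nonempty relatively weakly open sets are large, not that a prescribed slice of them is nonempty. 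What your room lemma actually produces is a sequence $x_n\in B_X$ with $Qx_n\to sv$ together with norming functionals $h_n$ for $Px_n$ that depend on $n$; a weak$^\ast$ cluster point of $(h_n)$ need not nearly-norm any $x_m$ --- the classical ``limit of norming functionals does not norm the limit'' obstruction. Worse, by Goldstine and the weak$^\ast$ continuity of $Q^{\ast\ast}$, the existence of your single $h$ is \emph{equivalent} to the existence of $A\in B_{\Xss}$ with $Q^{\ast\ast}A=sv$ and $A(h)=1$, i.e.\ to an exact norm-attainment statement in $\Xss$, which nothing in the hypotheses hands you.

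The paper's proof is engineered precisely to avoid needing one exactly norming functional. It builds the points $x_n\in B_X\cap\ran P$ and functionals $\xs_n$ recursively so that each increment $x_{n+1}-x_n$ lies in $\bigcap_{i\le n}\ker \xs_i$; this is what \cite[Proposition 2.7]{ALNT} supplies (two far-apart points inside the open unit ball that in addition lie in a prescribed finite-codimensional subspace). The kernel trick freezes the values $\xs_i(x_m)=\xs_i(x_i)>1-\eps_i$ for all $m\ge i$, so that \emph{every} weak$^\ast$ cluster point $\xss$ of $(x_n)$ satisfies $\xss(P^\ast \xs_i)>1-\eps_i$ for every $i$ and hence has norm one: simultaneous approximate norming by a recursively chosen sequence of functionals replaces exact norming by a single one. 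To repair your argument you would have to do the same --- replace the fixed $h$ by functionals $h_1,h_2,\dotsc$ chosen along the way, and require the approximating points to satisfy all constraints $h_i(x)>1-\eps_i$, $i\le n$, simultaneously, with the recursion keeping the earlier values frozen. That is exactly the paper's construction; it is the missing key idea, not a routine refinement of your sketch.
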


We use standard Banach space notation.
The letters $\mathcal{L}$ and $\lambda$ will stand, respectively,
for the $\sigma$-algebra of Lebesgue measurable subsets of the interval $[0,1]$
and for the Lebesgue measure on $\mathcal{L}$.
By $\|\cdot\|_1$ and $\|\cdot\|_\infty$ we denote the canonical norms on $L_1[0,1]$ and $L_\infty[0,1]$, respectively.
For a subset $A$ of the interval $[0,1]$, the symbol ${\bf 1}_A$ will stand for its characteristic function.
We deal only with real Banach spaces.

A very recent overview of renorming techniques can be found in \cite{GMZ}.

\section{Proof of Theorem \ref{thm: (L_1[0,1],|||.|||) is wMLUR and has the D2P}}

First things first:
since $\|\cdot\|_{0,1}=\|\cdot\|_1$ and $\|\cdot\|_{k,j}\leq\|\cdot\|_1$
whenever $k\in\N$ and $j\in\{1,\dotsc,2^k\}$,
the norm $\Vvert\cdot\Vvert$ defined by \eqref{L1-renorming-formula}
is equivalent to the original norm $\|\cdot\|_1$ on $L_1[0,1]$
(with $\|\cdot\|_1\leq\Vvert\cdot\Vvert\leq\sqrt{2}\|\cdot\|_1$).

For convenience of reference, let us point out two facts on the intervals~$I^k_j$.

\begin{fact}\label{fact2.1}
Whenever $k,m\in\{0\}\cup\N$ with $k<m$, and $j\in\{1,\dotsc,2^k\}$,
the interval $I_j^k$ is equal to the (disjoint) union $\bigcup_{i=(j-1)\cdot 2^{m-k}+1}^{j\cdot 2^{m-k}}I^m_i$.
\end{fact}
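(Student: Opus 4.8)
The plan is to prove this by a direct computation using the explicit definition $I_j^k=\bigl[\frac{j-1}{2^k},\frac{j}{2^k}\bigr)$. The statement is purely about how a dyadic interval of level $k$ splits into finer dyadic intervals of level $m>k$, so I would simply verify that the indicated union of level-$m$ intervals covers $I_j^k$ exactly, without gaps or overlaps.

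First I would compute the endpoints of the union. Writing $N=2^{m-k}$, the union runs over $i$ from $(j-1)N+1$ to $jN$. The leftmost interval, for $i=(j-1)N+1$, has left endpoint $\frac{i-1}{2^m}=\frac{(j-1)N}{2^m}=\frac{(j-1)2^{m-k}}{2^m}=\frac{j-1}{2^k}$, which is exactly the left endpoint of $I_j^k$. The rightmost interval, for $i=jN$, has right endpoint $\frac{i}{2^m}=\frac{jN}{2^m}=\frac{j2^{m-k}}{2^m}=\frac{j}{2^k}$, which is exactly the right endpoint of $I_j^k$. So the union spans precisely the half-open interval $\bigl[\frac{j-1}{2^k},\frac{j}{2^k}\bigr)=I_j^k$.

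Next I would check disjointness and the telescoping cover. Each $I_i^m=\bigl[\frac{i-1}{2^m},\frac{i}{2^m}\bigr)$ is half-open, and consecutive intervals $I_i^m$ and $I_{i+1}^m$ share the common point $\frac{i}{2^m}$, which belongs to $I_{i+1}^m$ but not to $I_i^m$; hence the intervals are pairwise disjoint and their right and left endpoints telescope, leaving no gap. Consequently $\bigcup_{i=(j-1)N+1}^{jN} I_i^m=\bigl[\frac{(j-1)N}{2^m},\frac{jN}{2^m}\bigr)=I_j^k$, which is the claimed identity.

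There is essentially no obstacle here; the only point requiring the tiniest care is the bookkeeping of the index range so that the half-open endpoints match up, together with the observation that $\frac{j-1}{2^k}$ and $\frac{j}{2^k}$ are genuinely dyadic of level $m$ (so that they arise as endpoints of level-$m$ intervals), which is immediate since $2^{m-k}$ is an integer when $k<m$.
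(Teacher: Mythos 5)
Your proof is correct, and it is exactly the routine endpoint-and-telescoping verification the paper has in mind: the paper states Fact~\ref{fact2.1} without proof, treating it as an immediate consequence of the definition $I_j^k=\bigl[\frac{j-1}{2^k},\frac{j}{2^k}\bigr)$. Your computation of the matching endpoints $\frac{(j-1)\cdot 2^{m-k}}{2^m}=\frac{j-1}{2^k}$ and $\frac{j\cdot 2^{m-k}}{2^m}=\frac{j}{2^k}$, together with the disjointness of consecutive half-open dyadic intervals, fills in that omitted verification correctly and completely.
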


\begin{fact}\label{fact2.2}
Whenever $A\in\mathcal{L}$ and $\eps>0$,
there are $k\in\{0\}\cup\N$ and a subset $J\subset\{1,\dotsc,2^k\}$
such that $\lambda\Bigl(A\triangle \bigcup_{j\in J}I^k_j\Bigr)<\eps$.
\textup{It follows that} the linear span of the set $\bigl\{{\bf 1}_{I^k_j}\colon k\in\{0\}\cup\N,\, j\in\{1,\dotsc,2^k\}\bigr\}$
is norm dense in $L_1[0,1]$.
\end{fact}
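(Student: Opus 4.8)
The plan is to prove the displayed approximation claim first and then read off the density statement as a routine consequence. For the approximation I would proceed in two stages: first approximate $A$ in measure by a \emph{finite} union of dyadic intervals, possibly taken at several different levels, and then invoke Fact~\ref{fact2.1} to rewrite that finite union as a union of dyadic intervals all sitting at one common level $k$, which is precisely the form demanded by the statement.

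For the first stage the essential tool is the outer regularity of $\lambda$. Given $A\in\mathcal{L}$ and $\eps>0$, I would fix an open set $U\subseteq[0,1)$ with $A\subseteq U$ (up to a null set) and $\lambda(U\setminus A)<\eps/2$. Every open subset of $[0,1)$ is a countable disjoint union of dyadic intervals, so write $U=\bigcup_{i=1}^\infty D_i$ with each $D_i$ of the form $I^{k_i}_{j_i}$. Since $\lambda(U)\le1$, the tail $\sum_{i>N}\lambda(D_i)$ tends to $0$, so I would choose $N$ with $\lambda\bigl(U\setminus\bigcup_{i=1}^N D_i\bigr)<\eps/2$ and put $V=\bigcup_{i=1}^N D_i$. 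As $A\subseteq U$ and $V\subseteq U$, the symmetric difference satisfies $A\triangle V\subseteq(U\setminus A)\cup(U\setminus V)$, hence $\lambda(A\triangle V)<\eps$.

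The second stage is bookkeeping: $V$ is a finite union of dyadic intervals at levels $k_1,\dots,k_N$, so taking $m=\max_i k_i$ and applying Fact~\ref{fact2.1} to each $D_i$ expresses $V$ as a union $\bigcup_{j\in J}I^m_j$ for a suitable $J\subseteq\{1,\dots,2^m\}$; thus $\lambda\bigl(A\triangle\bigcup_{j\in J}I^m_j\bigr)=\lambda(A\triangle V)<\eps$, which is the claim with $k=m$. For the density assertion I would note that $\bigl\|{\bf 1}_A-{\bf 1}_{\bigcup_{j\in J}I^k_j}\bigr\|_1=\lambda\bigl(A\triangle\bigcup_{j\in J}I^k_j\bigr)$, so every indicator ${\bf 1}_A$ lies in the $\|\cdot\|_1$-closure of $\linspan\{{\bf 1}_{I^k_j}\}$; since simple functions are $\|\cdot\|_1$-dense in $L_1[0,1]$ and the span is a subspace, density follows at once.

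I expect the only substantive point to be the first stage, namely controlling the entire symmetric difference $A\triangle V$ rather than a one-sided error. The trick that makes this clean is to perform both approximations \emph{from inside a single open superset} $U$: since $A$ and $V$ both lie below $U$, the two error terms $\lambda(U\setminus A)$ and $\lambda(U\setminus V)$ add up to bound $\lambda(A\triangle V)$ without any further estimation. The descent to a common level is where the paper's dyadic structure (Fact~\ref{fact2.1}) is used, but it is otherwise mechanical.
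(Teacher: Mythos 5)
The paper records Fact~\ref{fact2.2} without proof (it is treated as a standard measure-theoretic fact), so there is no in-paper argument to compare against; judged on its own, your proof is correct and complete. The two-stage structure is sound: outer regularity gives an open $U\supseteq A$ (up to the null set $\{1\}$, which you rightly flag) with $\lambda(U\setminus A)<\eps/2$; the dyadic decomposition of open subsets of $[0,1)$ into countably many disjoint intervals $I^{k_i}_{j_i}$, together with countable additivity, yields a finite subunion $V$ with $\lambda(U\setminus V)<\eps/2$; and the inclusion $A\triangle V\subseteq(U\setminus A)\cup(U\setminus V)$ — valid precisely because $A$ and $V$ both sit under the single set $U$ — gives $\lambda(A\triangle V)<\eps$ with no loss. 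Levelling via Fact~\ref{fact2.1} and the passage to density (noting ${\bf 1}_{\bigcup_{j\in J}I^k_j}=\sum_{j\in J}{\bf 1}_{I^k_j}$ lies in the span, that $\|{\bf 1}_A-{\bf 1}_{\bigcup_{j\in J}I^k_j}\|_1=\lambda\bigl(A\triangle\bigcup_{j\in J}I^k_j\bigr)$, and that simple functions are dense) are both handled correctly. For comparison, the other standard route avoids topology altogether: one checks that the family of sets approximable to arbitrary accuracy by finite unions of dyadic intervals is a $\sigma$-algebra (or applies a monotone class argument) containing the dyadic intervals, hence all Borel sets, and then invokes completeness of $\lambda$ for general $A\in\mathcal{L}$. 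Your argument via outer regularity and the maximal-dyadic-interval decomposition of open sets is equally standard and arguably more concrete; either would serve the paper's purposes.
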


The following proposition is the first portion of Theorem \ref{thm: (L_1[0,1],|||.|||) is wMLUR and has the D2P}.

\begin{prop}\label{prop:rommet_har_D2P}
The Banach space $X=(L_1[0,1],\Vvert\cdot\Vvert)$ has the D$2$P.
\end{prop}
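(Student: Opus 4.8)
The plan is to reduce to a basic relatively weakly open set and then exhibit inside it, for every $\delta>0$, two points whose $\Vvert\cdot\Vvert$-distance exceeds $2-\delta$. Since $\Vvert\cdot\Vvert$ is equivalent to $\|\cdot\|_1$, the dual is $L_\infty[0,1]$ as a set and the weak topology is unchanged, so it suffices to treat sets of the form $W=\{g\in B_X\colon |\phi_i(g)-\phi_i(f)|<\eps,\ i=1,\dots,n\}$ with $f\in B_X$, $\eps>0$ and $\phi_1,\dots,\phi_n\in L_\infty[0,1]$. The first — and least obvious — step is to move the \emph{centre} off $f$: the two witnesses will be symmetric about a midpoint $c$ and their distance will turn out to be $2\Vvert c\Vvert$, so I need a $c$ with $\Vvert c\Vvert$ close to $1$ and $\phi_i(c)$ close to $\phi_i(f)$, whereas $f$ itself may lie deep in the interior. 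Such a $c$ exists: the affine fibre $\{g\colon \phi_i(g)=\phi_i(f),\ i=1,\dots,n\}$ is a coset of the finite-co-dimensional (hence infinite-dimensional) subspace $\bigcap_i\Ker\phi_i$, so the norm is unbounded on it, and by the intermediate value theorem it meets the unit sphere. Approximating a norm-one point of the fibre in $L_1$ by a dyadic step function (Fact~\ref{fact2.2}) and rescaling, I obtain a $c$ that is constant on the intervals $I^{N_0}_j$ of some level $N_0$, has $\Vvert c\Vvert\le1$ with $\Vvert c\Vvert$ as close to $1$ as desired, and has each $\phi_i(c)$ as close to $\phi_i(f)$ as desired.

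Next I perturb $c$ by a fine oscillation of amplitude $|c|$. For $N>N_0$ let $r_N$ be the $\{-1,+1\}$-valued function alternating sign on consecutive intervals $I^N_j$, so that $\int_{I^k_j}r_N=0$ for all $k<N$, and set $w_N=|c|\,r_N$. Two mass computations drive the argument. First $|w_N|=|c|$ pointwise, whence $\|w_N\|_{k,j}=\|c\|_{k,j}$ for all $k,j$ and therefore $\Vvert w_N\Vvert=\Vvert c\Vvert$. Second, since $|c\pm w_N|=|c|\bigl(1\pm\operatorname{sign}(c)\,r_N\bigr)$ and $\int_{I^k_j}c\,r_N=0$ for $k<N$ (here $c$ is constant on level-$N_0$ intervals and $r_N$ is balanced on each of them), the masses are unchanged at the coarse scales, $\|c\pm w_N\|_{k,j}=\|c\|_{k,j}$ for $k<N$, while at the fine scales $k\ge N$ they at most double on half of the intervals. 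Hence, writing $T_N=\sum_{k\ge N}4^{-k}\sum_{j=1}^{2^k}\|c\|_{k,j}^2$,
\[
\Vvert c\pm w_N\Vvert^2\le \Vvert c\Vvert^2+3\,T_N,
\]
and the tail $T_N\to0$ as $N\to\infty$, so $\Vvert c\pm w_N\Vvert\to\Vvert c\Vvert$. Finally $r_N$ is weakly null (i.e. $\int g\,r_N\to0$ for every $g\in L_1$), so $\phi_i(w_N)=\int \phi_i\,|c|\,r_N\to0$ for each $i$.

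To conclude, put $g_1=c+w_N$, $g_2=c-w_N$ and $s_N=\max\{1,\Vvert g_1\Vvert,\Vvert g_2\Vvert\}$; by the above $s_N\to1$. The normalised points $g_1/s_N,\,g_2/s_N$ lie in $B_X$, and $\phi_i(g_l/s_N)\to\phi_i(c)$, so both belong to $W$ once $N$ is large (and $\Vvert c\Vvert$, $\phi_i(c)$ were chosen sharp enough), while using $g_1-g_2=2w_N$ and $\Vvert w_N\Vvert=\Vvert c\Vvert$,
\[
\Vvert g_1/s_N-g_2/s_N\Vvert=\frac{2\Vvert w_N\Vvert}{s_N}=\frac{2\Vvert c\Vvert}{s_N}\xrightarrow[N\to\infty]{}2\Vvert c\Vvert .
\]
Since $\Vvert c\Vvert$ may be taken arbitrarily close to $1$, this yields $\operatorname{diam}(W)=2$, and hence $X$ has the D$2$P.

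The heart of the matter is the scale-by-scale mass estimate of the second paragraph. The norm is strictly convex, so no genuinely antipodal pair can exist, and the D$2$P must instead stem from the failure of \emph{uniform} convexity; this failure is produced by oscillations finer than any fixed scale, and it works precisely because in $\Vvert\cdot\Vvert^2$ the weight $4^{-k}$ decays faster than the number $2^k$ of level-$k$ intervals grows, so the unavoidable doubling of the fine-scale masses of $c\pm w_N$ contributes only the vanishing tail $T_N$. The second delicate point is the initial change of centre: matching the finitely many functional values on a norm-one vector is what lets the symmetric pair sit inside a prescribed weak neighbourhood without forcing $\phi_i(f)=0$.
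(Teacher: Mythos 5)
Your proof is correct. The three pillars all hold: (i) the re-centring works --- the affine fibre $f+\bigcap_i\Ker\phi_i$ is norm-unbounded, so by the intermediate value theorem it meets the unit sphere, and a dyadic step-function approximation of that point, rescaled into the ball, gives your $c$; (ii) the coarse-scale identities $\|c\pm w_N\|_{k,j}=\|c\|_{k,j}$ for $k<N$ reduce, exactly as you say, to $\int_{I^k_j}c\,r_N\,d\lambda=0$, which holds because $c$ is constant on each level-$\max\{k,N_0\}$ interval while $r_N$ integrates to zero over it; and (iii) the bound $\Vvert c\pm w_N\Vvert^2\le\Vvert c\Vvert^2+3T_N$ with $T_N\to0$, combined with the weak$^\ast$-nullity of $(r_N)$ in $L_\infty=(L_1)^\ast$, puts the normalised pair into $W$ for large $N$ at mutual distance $2\Vvert c\Vvert/s_N\to 2\Vvert c\Vvert$.

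The route is genuinely different from the paper's, although the engine is the same. The paper does not re-centre: it picks a \emph{bounded} representative $f\in W$ with $\Vvert f\Vvert=1$ and replaces it by two level-$(K{+}2)$ step functions $f_1,f_2$ that carry the positive and negative mass of $f$ within each level-$K$ interval onto disjoint quarters; then $\|f_i\|_{k,j}=\|f\|_{k,j}$ and, by disjointness of supports, $\|f_1-f_2\|_{k,j}=2\|f\|_{k,j}$ for $k\le K$. Staying inside the weak neighbourhood is arranged by exact annihilation --- $f_i-f$ kills every level-$K$ indicator, hence every step approximation $s_l$ of $h_l$ --- and the residue $\langle f_i-f,h_l-s_l\rangle$ is controlled by the $L_\infty$ estimate $\|f_i\|_\infty\le 4\|f\|_\infty$; everything happens at one fixed level $K$, with explicit constants. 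You trade the density-of-bounded-functions step and the $L_\infty$ bookkeeping for the fibre argument plus an asymptotic weak$^\ast$-null Rademacher trick: your centre need only be a step function, and the functionals are handled in the limit $N\to\infty$ rather than by exact orthogonality. Note that your two points $c\pm|c|\,r_N$ are just $2c$ restricted to the complementary sets $\{\operatorname{sign}(c)\,r_N=\pm1\}$, so both proofs ultimately exhibit a pair of disjointly supported, nearly norm-one functions whose masses agree with a central function at all coarse scales, the fine-scale cost being the vanishing tail that the weights $4^{-k}$ (against $2^k$ intervals per level) guarantee. What each buys: the paper's argument is quantitative at a single scale and avoids re-centring; yours is shorter on estimates and more conceptual --- and, amusingly, the paper's own first step (producing a bounded $f\in W$ with $\Vvert f\Vvert=1$ \emph{exactly}) quietly relies on the same unboundedness-of-the-norm-on-a-finite-co-dimensional-fibre observation that you make explicit.
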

\begin{proof} Let $W$ be a weakly open set in $X$ which intersects the closed unit ball of $X$.
Since (essentially) bounded functions form a norm dense subspace in $L_1[0,1]$,
there exists a bounded on $[0,1]$ function $f\in W$ with $\Vvert f \Vvert =1$.
By identifying the dual space of $L_1[0,1]$ with $L_\infty[0,1]$ in the canonical way,
it follows that there are $m\in\N$, $h_1,\dotsc,h_m \in L_\infty [0,1]$
with $\|h_l\|_\infty=1$ for every $l\in\{1,\dotsc,m\}$,
and $\delta>0$ such that
\[
V:=\bigl\{g\in L_1[0,1]\colon \text{$|\langle g-f,h_l\rangle|<\delta$ for every $l\in\{1,\dotsc,m\}$}\bigr\}\subset W.
\]
Set $U=V\cap B_{X}$ and let $\eps>0$ be arbitrary.
In order to finish the proof it suffices to find  $g_1,g_2\in U$ such that $\Vvert g_1-g_2\Vvert>2-\varepsilon$.

Pick $\gamma\in (0,1)$ such that
\begin{equation}\label{one}
(5\|f\|_\infty+1)\gamma<\delta
\qquad\text{and}\qquad
2(1-\gamma)^{\frac{3}{2}}>2-\varepsilon.
\end{equation}
According to Facts~\ref{fact2.2} and \ref{fact2.1}, there are $K\in\mathbb{N}$
and functions $s_1,\dotsc,s_m\in \linspan\bigr\{{\bf 1}_{I_j^K}\colon\allowbreak j\in\{1,2,\dotsc 2^K\}\bigr\}$ such that
\begin{align}\label{two}
2^{-K}<\gamma
\end{align}
and
\begin{align}\label{three}
\|h_l-s_l\|_1&<\gamma \quad\text{for every  $l\in\{1,\dotsc,m\}$.}
\end{align}
For every $j\in\{1,2,\dotsc,2^K\}$, set
\[
b_j:=\int_{I_j^K}f^+\,d\lambda
\qquad\text{and}\qquad
c_j:=\int_{I_j^K}f^-\,d\lambda;
\]
we then have
\begin{align}
\int_{I_j^K}f\,d\lambda
&=\int_{I_j^K}(f^+ -f^-)\,d\lambda
=b_j-c_j,\label{eq: int_(I^k_j)f dlambda=b_j-c_j}\\
\int_{I_j^K}|f|\,d\lambda
&=\int_{I_j^K}(f^+ +f^-)\,d\lambda
=b_j+c_j.\label{eq: int_(I^k_j)|f|dlambda=b_j+c_j}
\end{align}
Define
\[
f_1
:=2^{K+2}\sum_{j=1}^{2^K}\left(b_j {\bf 1}_{I_{4j-3}^{K+2}}-c_j {\bf 1}_{I_{4j-2}^{K+2}}\right)
\]
and
\[
f_2
:=2^{K+2}\sum_{j=1}^{2^K}\left(b_j {\bf 1}_{I_{4j-1}^{K+2}}-c_j {\bf 1}_{I_{4j}^{K+2}}\right).
\]
It is straightforward to verify that
whenever $k\in\{0,\dotsc,K\}$ and $j\in\{1,\dotsc,\allowbreak2^k\}$, one has
\begin{align}
\int_{I^k_j}f_1\,d\lambda
&=\int_{I^k_j}f_2\,d\lambda=\int_{I^k_j}f\,d\lambda,\label{five}\\
\int_{I^k_j}|f_1|\,d\lambda
&=\int_{I^k_j}|f_2|\,d\lambda=\int_{I^k_j}|f|\,d\lambda,\label{six}\\
\int_{I^k_j}|f_1-f_2|\,d\lambda
&=2\int_{I^k_j}|f|\,d\lambda.\label{seven}
\end{align}
Indeed, \eqref{five}--\eqref{seven} for $k=K$ follow
from \eqref{eq: int_(I^k_j)f dlambda=b_j-c_j} and \eqref{eq: int_(I^k_j)|f|dlambda=b_j+c_j}
by observing that $I_j^K$ is the disjoint union
of the intervals $I_{4j-3}^{K+2}$, $I_{4j-2}^{K+2}$, $I_{4j-1}^{K+2}$, $I_{4j}^{K+2}$
of equal length $2^{-(K+2)}$.
The results for $k\in\{0,1,2,\dotsc,K-1\}$ follow directly from those for $k=K$
when we bring to mind Fact~\ref{fact2.1}.

We rewrite \eqref{six} and \eqref{seven} as
\begin{equation}\label{eight}
\|f_i\|_{k,j}
=\|f\|_{k,j}
\quad\text{whenever $i\in\{1,2\}$, $k\in\{0,\dotsc,K\}$, and $j\in\{1,\dotsc,2^k\}$,}
\end{equation}
and
\begin{equation}\label{nine}
\|f_1-f_2\|_{k,j}
=2\|f\|_{k,j}
\quad\text{whenever $k\in\{0,\dotsc,K\}$ and $j\in\{1,\dotsc,2^k\}$.}
\end{equation}

\noindent%
In particular, we get for each $i\in\{1,2\}$,
\begin{equation}\label{ten}
\|f_i\|_1
=\|f_i\|_{0,1}
=\|f\|_{0,1}
=\|f\|_1
\leq \Vvert f\Vvert
=1
\end{equation}
and therefore, whenever $i\in\{1,2\}$, $k\in\{0,\dotsc,K\}$, and $j\in\{1,\dotsc,2^k\}$,
\[
\|f_i\|_{k,j}
\leq\|f_1\|_1
\leq 1.
\]
Using this inequality together with \eqref{eight} and \eqref{two} we get, for each $i\in\{1,2\}$,
\begin{equation}\label{eleven}
\begin{aligned}
\Vvert f_i \Vvert^2
&=\sum_{k=0}^K 4^{-k}\sum_{j=1}^{2^k}\|f_i\|_{k,j}^2+\sum_{k=K+1}^\infty 4^{-k}\sum_{j=1}^{2^k}\|f_i\|_{k,j}^2\\
&\leq\Vvert f\Vvert^2+2^{-K}
\leq 1+\gamma.
\end{aligned}
\end{equation}
From \eqref{nine} we get
\begin{equation}\label{twelve}
\begin{aligned}
\Vvert f_1-f_2\Vvert^2
&\geq\sum_{k=0}^K 4^{-k}\sum_{j=1}^{2^k}\|f_1-f_2\|_{k,j}^2
=4\sum_{k=0}^K 4^{-k}\sum_{j=1}^{2^k}\|f\|_{k,j}^2\\
&=4\biggl(\Vvert f\Vvert^2-\sum_{k=K+1}^\infty 4^{-k}\sum_{j=1}^{2^k}\|f\|_{k,j}^2\biggr)\\
&\geq4\bigl(\Vvert f\Vvert^2 -2^{-K}\bigr
)=4\bigl(1-2^{-K}\bigr)
\geq 4(1-\gamma),
\end{aligned}
\end{equation}
where we invoked \eqref{two} in the last step.

For every $j\in\{1,\dotsc,2^K\}$,
\[
b_j
\leq \|f\|_\infty\,\lambda(I_j^K)
=2^{-K}\|f\|_\infty
\quad\text{and}\quad
c_j
\leq \|f\|_\infty\,\lambda(I_j^K)
=2^{-K}\|f\|_\infty.
\]
It follows that, for each $i\in\{1,2\}$,
\begin{equation}\label{four}
\|f_i\|_\infty
\leq 2^{K+2}\,2^{-K}\|f\|_\infty
=4\,\|f\|_\infty.
\end{equation}

Whenever $i\in\{1,2\}$ and $j\in\{1,\dotsc,2^K\}$, by \eqref{five} with $k=K$,
\[
\langle f_i-f,{\bf 1}_{I_j^K}\rangle
=\int_{I^K_j}(f_i-f)\,d\lambda=0,
\]
thus, for every $l\in\{1,\dots,m\}$,
keeping in mind that $s_l\in\linspan\bigr\{{\bf 1}_{I_j^K}\colon j\in\{1,2,\dotsc 2^K\}\bigr\}$,
one has $\langle f_i-f,s_l\rangle=0$.
It follows that, whenever $i\in\{1,2\}$ and $l\in\{1,\dotsc,m\}$, by \eqref{four} and \eqref{three},
\begin{equation}\label{fourteen}
\begin{aligned}
|\langle f_i-f,h_l\rangle|
&=|\langle f_i-f,h_l-s_l\rangle|
=\left|\int_{[0,1]} (f_i-f)\,(h_l-s_l)\,d\lambda\right|\\
&\leq\int_{[0,1]} |f_i-f|\,|h_l-s_l|\,d\lambda\\
&\leq\int_{[0,1]} (|f_i|+|f|)\, |h_l-s_l|\,d\lambda\\
&\leq\bigl(\|f_i\|_\infty+\|f\|_\infty\bigr)\,\|h_l-s_l\|_1\\
&\leq5\|f\|_\infty \,\|h_l-s_l\|_1\\
&<5\|f\|_\infty\, \gamma.
\end{aligned}
\end{equation}

For each $i\in\{1,2\}$, defining $g_i:=(1-\gamma)f_i$, we have $g_i\in U$ because, by~\eqref{eleven},
\begin{equation*}
\Vvert g_i\Vvert
=(1-\gamma)\Vvert f_i\Vvert\leq (1-\gamma)(1+\gamma)^{\frac{1}{2}}
=(1-\gamma)^{\frac{1}{2}}(1-\gamma^2)^{\frac{1}{2}}
<1
\end{equation*}
and, since $g_i-f_i=(1-\gamma)f_i-f_i=-\gamma f_i$, for every $l\in\{1,\dotsc,m\}$,
by \eqref{fourteen}, \eqref{ten}, and \eqref{one},
\begin{align*}
|\langle g_i-f,h_l|&\leq|\langle g_i-f_i,h_l|+|\langle f_i-f,h_l|\\
&<\|g_i-f_i\|_1\cdot\|h_l\|_\infty+5\|f\|_\infty\gamma\\
&=\gamma \|f_i\|_1+5\gamma\|f\|_\infty
=(5\|f\|_\infty+\|f_i\|_1)\gamma\\
&\leq(5\|f\|_\infty+1)\gamma
<\delta.
\end{align*}
It remains to observe that, by \eqref{twelve} and \eqref{one},
\[
\Vvert g_1-g_2\Vvert
=(1-\gamma)\Vvert f_1-f_2\Vvert
\geq 2(1-\gamma)^{\frac{3}{2}}
>2-\varepsilon.
\]
\end{proof}

The following proposition is the remaining portion of Theorem \ref{thm: (L_1[0,1],|||.|||) is wMLUR and has the D2P}.

\begin{prop}\label{prop:Rommet_er_wMLUR}
The Banach space $X=(L_1[0,1],\Vvert\cdot\Vvert)$ is wMLUR.
\end{prop}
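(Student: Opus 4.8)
The plan is to fix $f\in X$ with $\Vvert f\Vvert=1$ and a sequence $(g_n)$ satisfying $\Vvert f+g_n\Vvert\to 1$ and $\Vvert f-g_n\Vvert\to 1$, and to show that $g_n\to 0$ weakly in $L_1[0,1]$, which is what Definition~\ref{defn:MLUR} requires. My first step would be to extract, for each fixed index $(k,j)$, the scalar convergences $\|f\pm g_n\|_{k,j}\to\|f\|_{k,j}$. Writing $\Vvert h\Vvert^2=\sum_{k,j}a_{k,j}(h)^2$ with $a_{k,j}(h)=2^{-k}\|h\|_{k,j}$, each $a_{k,j}$ is a seminorm, so subadditivity gives $a_{k,j}(f)\le\tfrac12\bigl(a_{k,j}(f+g_n)+a_{k,j}(f-g_n)\bigr)$ and convexity of $t\mapsto t^2$ gives $\bigl(\tfrac12(a_{k,j}(f+g_n)+a_{k,j}(f-g_n))\bigr)^2\le\tfrac12\bigl(a_{k,j}(f+g_n)^2+a_{k,j}(f-g_n)^2\bigr)$. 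Summing over $(k,j)$ yields $\Vvert f\Vvert^2\le\tfrac12\bigl(\Vvert f+g_n\Vvert^2+\Vvert f-g_n\Vvert^2\bigr)\to 1=\Vvert f\Vvert^2$, so both inequalities become asymptotic equalities with nonnegative summands; hence each summand tends to $0$. For fixed $(k,j)$ the convexity gap equals $\tfrac14\bigl(a_{k,j}(f+g_n)-a_{k,j}(f-g_n)\bigr)^2$ and the subadditivity gap forces $\tfrac12\bigl(a_{k,j}(f+g_n)+a_{k,j}(f-g_n)\bigr)\to a_{k,j}(f)$; together these give $\|f\pm g_n\|_{k,j}\to\|f\|_{k,j}$.

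Next I would establish that $(g_n)$ is bounded and uniformly integrable. Boundedness follows from $\Vvert g_n\Vvert\le\Vvert f+g_n\Vvert+\Vvert f\Vvert$ together with $\|\cdot\|_1\le\Vvert\cdot\Vvert$. For uniform integrability I apply the case $(k,j)=(0,1)$, i.e. $I^0_1=[0,1]$: from the pointwise identity $|a+b|+|a-b|=2\max(|a|,|b|)$ one gets $\int_{[0,1]}(|g_n|-|f|)^+\,d\lambda=\tfrac12\bigl(\|f+g_n\|_1+\|f-g_n\|_1\bigr)-\|f\|_1\to 0$, since $\|\cdot\|_1=\|\cdot\|_{0,1}$. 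Given $\eps>0$, absolute continuity of $\int|f|\,d\lambda$ provides $\delta>0$ with $\int_A|f|\,d\lambda<\eps/2$ whenever $\lambda(A)<\delta$; then for such $A$ one has $\int_A|g_n|\,d\lambda\le\int_{[0,1]}(|g_n|-|f|)^+\,d\lambda+\int_A|f|\,d\lambda<\eps$ once $n$ is large, and the finitely many remaining $g_n$ are each uniformly integrable, so after shrinking $\delta$ the estimate $\int_A|g_n|\,d\lambda<\eps$ holds for all $n$. Thus $(g_n)$ is uniformly integrable, and by the Dunford--Pettis theorem it is relatively weakly compact.

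Finally I would show that every weak subsequential limit of $(g_n)$ vanishes, which together with relative weak compactness forces $g_n\to 0$ weakly. Suppose $g_{n_l}\to g$ weakly; then $(f\pm g_{n_l})\mathbf{1}_I\to(f\pm g)\mathbf{1}_I$ weakly for each dyadic interval $I=I^k_j$, and weak lower semicontinuity of the $L_1$-norm combined with the first step gives $\int_I|f\pm g|\,d\lambda\le\liminf_l\int_I|f\pm g_{n_l}|\,d\lambda=\int_I|f|\,d\lambda$. Since $A\mapsto\int_A|f\pm g|\,d\lambda$ and $A\mapsto\int_A|f|\,d\lambda$ are finite measures with the former dominated by the latter on the dyadic intervals, and these generate $\mathcal{L}$ (Fact~\ref{fact2.2}), the domination $\int_A|f\pm g|\,d\lambda\le\int_A|f|\,d\lambda$ extends to all $A\in\mathcal{L}$; hence $|f+g|\le|f|$ and $|f-g|\le|f|$ a.e. Examining the cases $f(x)>0$, $f(x)<0$ and $f(x)=0$ pointwise then forces $g=0$ a.e. The main obstacle is exactly this passage from the ``one-dimensional'' data $\|f\pm g_n\|_{k,j}\to\|f\|_{k,j}$ to genuine weak nullity: the data must first be converted into uniform integrability, which secures weak compactness, and then into the pointwise domination $|f\pm g|\le|f|$ that pins the limit down to $0$.
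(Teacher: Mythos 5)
Your proof is correct, and its skeleton coincides with the paper's: the convexity argument yielding $\|f\pm g_n\|_{k,j}\to\|f\|_{k,j}$ for each fixed $(k,j)$ (the paper cites \cite[Fact II.2.3, (ii)]{DGZ} for exactly this, which you unpack by hand), then equi-integrability of $(g_n)$, then the Dunford--Pettis theorem, then identification of every weak subsequential limit as $0$. You differ from the paper in two local steps, both legitimately. First, you get equi-integrability directly from the identity $|a+b|+|a-b|=2\max(|a|,|b|)$, which turns the case $(k,j)=(0,1)$ of the first step into $\int_{[0,1]}(|g_n|-|f|)^+\,d\lambda\to0$; the paper instead argues by contradiction, extracting a subsequence and sets $A_n$ with $\lambda(A_n)\to0$ and $\int_{A_n}|g_n|\,d\lambda\ge3\eps$. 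Your variant is arguably cleaner and avoids the subsequence bookkeeping. Second, for the final identification the paper proves in a separate lemma that $\Vvert\cdot\Vvert$ is strictly convex, and then uses weak lower semicontinuity of $\Vvert\cdot\Vvert$ together with the resulting extremality of $f$ in $B_X$ to force $g=0$; you instead use weak lower semicontinuity of the seminorms $h\mapsto\int_I|h|\,d\lambda$ on dyadic intervals to get $\int_I|f\pm g|\,d\lambda\le\int_I|f|\,d\lambda$, extend this domination to all of $\mathcal{L}$, and finish pointwise. This is in effect an inequality version of the paper's strict-convexity lemma, whose proof likewise passes from dyadic intervals to all of $\mathcal{L}$ via Fact~\ref{fact2.2} and concludes pointwise; the paper's route is more modular (strict convexity is isolated as a statement of independent interest), while yours is self-contained and never needs weak lower semicontinuity of the full norm $\Vvert\cdot\Vvert$. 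The one step you state tersely---that domination of the two finite measures on dyadic intervals extends to all of $\mathcal{L}$---does require a word of justification (monotone class theorem, or approximation via Fact~\ref{fact2.2} combined with absolute continuity of the integrals), but this matches the level of detail the paper itself allows in its lemma, so I would not call it a gap.
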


We start with a lemma.

\begin{lem}\label{claim:Rommet_er_rotund}
The space $X$ is strictly convex.
\end{lem}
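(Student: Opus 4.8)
The plan is to show that the norm $\Vvert\cdot\Vvert$ defined by \eqref{L1-renorming-formula} is strictly convex by exploiting the fact that it is built as an $\ell_2$-type aggregate of the seminorms $\|\cdot\|_{k,j}$, with the crucial $k=0$ term being precisely $\|\cdot\|_1$. Recall that a norm is strictly convex if whenever $\Vvert f\Vvert=\Vvert g\Vvert=1$ and $\Vvert f+g\Vvert=2$, one has $f=g$; equivalently, equality in the triangle inequality forces proportionality. First I would fix $f,g$ with $\Vvert f+g\Vvert=\Vvert f\Vvert+\Vvert g\Vvert$ and seek to deduce $f=g$ whenever $\Vvert f\Vvert=\Vvert g\Vvert$.

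The key structural point is that for each fixed $(k,j)$ the quantity $\|f\|_{k,j}=\int_{I^k_j}|f|\,d\lambda$ is a seminorm, and $\Vvert\cdot\Vvert$ is obtained from the countable family $\bigl(2^{-k}\|\cdot\|_{k,j}\bigr)$ via the $\ell_2$-norm of the resulting sequence. The $\ell_2$-norm is strictly convex, so equality in the triangle inequality for $\Vvert\cdot\Vvert$ propagates to equality at the level of the coordinate sequences: I would invoke the strict convexity of $\ell_2$ to conclude that the sequences $\bigl(\|f\|_{k,j}\bigr)_{k,j}$ and $\bigl(\|g\|_{k,j}\bigr)_{k,j}$ are proportional, say $\|g\|_{k,j}=t\,\|f\|_{k,j}$ for all $(k,j)$ and some $t\ge 0$, \emph{and} that each individual seminorm satisfies $\|f+g\|_{k,j}=\|f\|_{k,j}+\|g\|_{k,j}$. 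The $k=0$ instance of the latter reads $\|f+g\|_1=\|f\|_1+\|g\|_1$, i.e.\ equality in the triangle inequality for the genuine $L_1$-norm.

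The main obstacle, and the heart of the argument, is to extract pointwise information from these integral equalities. Equality $\|f+g\|_1=\|f\|_1+\|g\|_1$ in $L_1$ does not by itself give $f=g$ (it only forces $f$ and $g$ to have the same sign $\lambda$-a.e.\ where both are nonzero), so the single $k=0$ relation is insufficient; this is exactly why the finer seminorms are needed. The plan is to use the equalities $\int_{I^k_j}|f+g|\,d\lambda=\int_{I^k_j}(|f|+|g|)\,d\lambda$ for \emph{all} dyadic intervals $I^k_j$ simultaneously: since $|f+g|\le|f|+|g|$ pointwise a.e., equality of the integrals over every dyadic interval forces $|f+g|=|f|+|g|$ $\lambda$-a.e.\ on $[0,1]$, whence $f$ and $g$ agree in sign a.e. Combining this with the proportionality $\|g\|_{k,j}=t\|f\|_{k,j}$ over the full dyadic family, together with the density Fact~\ref{fact2.2} which guarantees the dyadic seminorms separate enough structure of $L_1$-functions, I would argue that $g=tf$ a.e.; under the normalisation $\Vvert f\Vvert=\Vvert g\Vvert$ this gives $t=1$ and hence $f=g$.

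Concretely, the deduction $g=tf$ should run as follows: fixing a point where $f\ne 0$, the proportionality of the integrals $\int_{I^k_j}|g|\,d\lambda=t\int_{I^k_j}|f|\,d\lambda$ across all dyadic intervals containing that point, sent through a Lebesgue differentiation / martingale argument as $k\to\infty$ (the dyadic intervals generate the Borel $\sigma$-algebra, cf.\ Fact~\ref{fact2.2}), yields $|g|=t|f|$ a.e.; combined with the established sign agreement this gives $g=tf$ a.e. I expect the differentiation-of-measures step to be the one requiring genuine care, since one must ensure the proportionality constant $t$ is independent of the interval and pass cleanly to the pointwise limit over the dyadic filtration.
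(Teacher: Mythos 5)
Your proposal is correct and follows essentially the same route as the paper: strict convexity of the $\ell_2$-type aggregation reduces matters to the individual seminorms $\|\cdot\|_{k,j}$, Fact~\ref{fact2.2} (dyadic density) upgrades the resulting integral identities to a.e.\ pointwise statements about $|f|$, $|g|$, $|f+g|$, and a sign argument finishes. The only cosmetic difference is that the paper works with the midpoint formulation, getting $\|f\|_{k,j}=\|g\|_{k,j}=\bigl\|\tfrac{f+g}{2}\bigr\|_{k,j}$ outright and concluding by contradiction, whereas you track a proportionality constant $t$ through the triangle-equality formulation and normalise it to $1$ at the end.
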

\begin{proof}
Let $f,g\in L_1[0,1]$ with $\Vvert f\Vvert=\Vvert g\Vvert=\Vvert\frac{f+g}{2}\Vvert=1$.
By a standard convexity argument (see, e.g., \cite[Fact II.2.3, (ii)]{DGZ}),
whenever $k\in\{0\}\cup\N$ and $j\in\{1,\dotsc,2^k\}$,
we have $\|f\|_{k,j}=\|g\|_{k,j}=\|\frac{f+g}{2}\|_{k,j}$ or, in other words,
\[
\int_{I_j^k}|f|\,d\lambda
=\int_{I_j^k}|g|\,d\lambda=\int_{I_j^k}\left|\frac{f+g}{2}\right|\,d\lambda.
\]
By applying Fact~\ref{fact2.2}, it follows that, for every $A\in\mathcal{L}$,
\begin{equation}\label{eq: int_A |f| dlambda = int_A |g| dlambda=int_A |f+g|/2 dlambda}
\int_A|f|\,d\lambda=\int_A|g|\,d\lambda=\int_A\left|\frac{f+g}{2}\right|\,d\lambda;
\end{equation}
thus $|f|=|g|=|\frac{f+g}{2}|$ a.e.
It follows that $f=g$ a.e.
Indeed, suppose for contradiction that $\lambda(A)>0$ where $A:=\{t\in[0,1]\colon f(t)\not=g(t)\}$.
Since $|f|=|g|$ a.e., we must have $f=-g$ a.e.~in~$A$,
therefore also $|f|\not=0$ a.e.~in~$A$ (and $|g|\not=0$ a.e.~in $A$),
hence $\int_A|f|\,d\lambda=\int_A|g|\,d\lambda>0$.
We now have
\begin{align*}
\int_A|f+g|\,d\lambda
&=0
<\int_A|f|\,d\lambda+\int_A|g|\,d\lambda.
\end{align*}
This contradicts \eqref{eq: int_A |f| dlambda = int_A |g| dlambda=int_A |f+g|/2 dlambda}.
\end{proof}

\begin{proof}[Proof of Proposition~\ref{prop:Rommet_er_wMLUR}]
Let $f\in L_1[0,1]$ with $\Vvert f\Vvert=1$ and let $(f_n)_{n=1}^\infty$ be a sequence in $L_1[0,1]$
such that $\Vvert f\pm f_n\Vvert\xrightarrow[n\to\infty]{}1$.
We must show that $f_n\xrightarrow[n\to\infty]{}0$ weakly in $L_1[0,1]$.
To this end, first observe that the sequence $(f_n)_{n=1}^\infty$ is equi-integrable (or uniformly integrable).
Indeed, suppose for contradiction that this is not the case.
Then there are a real number $\eps>\nobreak0$, a subsequence $(g_n)_{n=1}^\infty$ of $(f_n)_{n=1}^\infty$,
and a sequence $(A_n)_{n=1}^\infty$ in $\mathcal{L}$ with $\lambda(A_n)\xrightarrow[n\to\infty]{}0$
such that $\int_{A_n}|g_n|\,d\lambda\geq3\eps$ for every $n\in\N$.
Since the indefinite integral $\int_{(\cdot)}|f|\,d\lambda$ is absolutely continuous with respect to $\lambda$,
there is a real number $\delta>0$ such that $\int_D|f|\,d\lambda<\eps$
whenever $\lambda(D)<\delta$.
Choose $N\in\N$ such that $\lambda(A_n)<\delta$ for every $n\geq N$.
Since $\Vvert f\pm f_n\Vvert\xrightarrow[n\to\infty]{}1$,
by a standard convexity argument (see, e.g., \cite[Fact II.2.3, (ii)]{DGZ}),
whenever $k\in\{0\}\cup\N$ and $j\in\{1,\dotsc,2^k\}$,
we have $\|f\pm f_n\|_{k,j}\xrightarrow[n\to\infty]{}\|f\|_{k,j}$
and thus also 
\[
\frac{1}{2}\bigl(\|f+f_n\|_{k,j}+\|f-f_n\|_{k,j}\bigr)\xrightarrow[n\to\infty]{}\|f\|_{k,j}.
\]
On the other hand, whenever $n\in\N$, $n\geq N$, we have (recall that $I^0_1=[0,1)$)
\begin{align*}
\frac{1}{2}\bigl(\|(f+g_n\|_{0,1}+\|f-g_n\|_{0,1}\bigr)
&=\frac{1}{2}\biggl(\int_{I^0_1}|f+g_n|\,d\lambda+\int_{I^0_1}|f-g_n|\,d\lambda\biggr)\\
&=\frac{1}{2}\biggl(\int_{I^0_1\setminus A_n}|f+g_n|\,d\lambda+\int_{I^0_1\setminus A_n}|f-g_n|\,d\lambda\biggr)\\
&\quad\quad +\frac{1}{2}\biggl(\int_{A_n}|f+g_n|\,d\lambda+\int_{A_n}|f-g_n|\,d\lambda\biggr)\\
&\geq\int_{I^0_1\setminus A_n}|f|\,d\lambda+\int_{A_n}|g_n|\,d\lambda-\int_{A_n}|f|\,d\lambda\\
&=\|f\|_{0,1}+\int_{A_n}|g_n|\,d\lambda-2\int_{A_n}|f|\,d\lambda\\
&>\|f\|_{0,1}+3\eps-2\eps
=\|f\|_{0,1}+\eps,
\end{align*}
a contradiction.
Thus the sequence $(f_n)_{n=1}^\infty$ is equi-integrable.

Since the sequence $(f_n)_{n=1}^\infty$ is bounded,
by the characterisation due to Dunford and Pettis (see, e.g., \cite[page 109, Theorem 5.2.9]{AK}),
this sequence is relatively weakly compact.
Thus, by passing to a subsequence, we may assume that $f_n\xrightarrow[n\to\infty]{}g$ weakly for some $g\in L_1[0,1]$
(because it suffices to show that every subsequence of $(f_n)_{n=1}^\infty$
has a further subsequence converging weakly to $0$).
We have
\[
\Vvert f\Vvert=1=\lim_{n\to\infty}\Vvert f\pm f_n\Vvert\geq \Vvert f\pm g\Vvert.
\]
Since the space $X$ is strictly convex, $f$ is an extreme point of $B_X$, therefore $g=0$.
\end{proof}

\section{Proof of Theorem \ref{thm: norm-one projection with finite-co-dimensional range, URED, and D2P}}

\begin{proof}[Proof of Theorem \ref{thm: norm-one projection with finite-co-dimensional range, URED, and D2P}]
We follow the basic idea of the proof of \cite[Proposition 2.10]{ALNT}.
Assume that $X$ has the D$2$P and let $0<\delta<1$.
Let $z\in\ker P$ with $\|z\|=1-\delta$ and let $(\varepsilon_n)_{n=1}^\infty$ be a sequence of positive numbers tending to zero.
The assertion follows from the following claim.

\begin{Claim}
There exist sequences $(x_n)_{n=1}^\infty$ in $B_X\cap\ran P$ and $(\xs_n)_{n=1}^\infty$ in $B_{\Xs}$ such that
\begin{enumerate}
\item [\textup{(1)}] $\|z+x_n\|<1$ for every $n\in\mathbb{N}$;
\item [\textup{(2)}] $\xs_n(x_m)=\xs_n(x_n)>1-\varepsilon_n$ whenever $m,n\in\mathbb{N}$ with $m\geq n$.
\end{enumerate}
\end{Claim}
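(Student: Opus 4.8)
The plan is to construct the two sequences by induction on $n$, arranging at each step that all \emph{future} vectors lie on a fixed finite-codimensional affine flat, so that condition~(2) becomes automatic. First I would record a normalisation: since $P$ has norm one, so does $P^{\ast}$, and for $y\in\ran P$ one has $\|y\|=\sup\{\psi(y)\colon\psi\in B_{\Xs}\cap\ran P^{\ast}\}$, because $P^{\ast}\psi$ agrees with $\psi$ on $\ran P$. Hence every $x_n\in\ran P$ can be almost normed by a functional $\xs_n\in B_{\Xs}\cap\ran P^{\ast}$; as $\ran P^{\ast}=(\ker P)^\perp$ and $z\in\ker P$, such a functional automatically satisfies $\xs_n(z)=0$. (This is the feature that will drive the subsequent line-segment argument, although it is not literally needed for the statement of the Claim.)

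For the induction, set $x_0=0$ and suppose $x_1,\dots,x_{n-1}$ and $\xs_1,\dots,\xs_{n-1}$ have been built with (1) and (2). Writing $a_i:=\xs_i(x_i)$, condition~(2) already in force gives $\xs_i(x_{n-1})=a_i$ for all $i<n$, so $x_{n-1}$ is a particular solution of the affine system $\{\,x\in\ran P\colon \xs_i(x)=a_i\ (i<n)\,\}$, whose solution set is the flat $\mathcal F_n=x_{n-1}+V_n$ with $V_n=\ran P\cap\bigcap_{i<n}\ker \xs_i$ a closed finite-codimensional subspace of $X$. Thus it suffices to find $x_n\in\mathcal F_n$ with $\|x_n\|>1-\varepsilon_n$ and $\|z+x_n\|<1$; then any almost-norming $\xs_n\in B_{\Xs}\cap\ran P^{\ast}$ of $x_n$ completes the step, since every $x_m$ produced later will be chosen in $\mathcal F_m\subseteq\{x\colon \xs_n(x)=a_n\}$.

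The crux is therefore the following lemma: if $X$ has the D$2$P, $V\subseteq\ran P$ is finite-codimensional, $p\in\ran P$ with $\|z+p\|<1$, and $\eta>0$, then there is $v\in V$ with $\|p+v\|>1-\eta$ and $\|z+p+v\|<1$. One half is soft. Set $C=\{v\in V\colon\|z+p+v\|\le1\}$; since $\|z+p\|<1$ this is a bounded closed convex set with $0$ in its relative interior, and its relative interior coincides with $\{v\colon\|z+p+v\|<1\}$; moreover $\sup_{v\in C}\|p+v\|\le1$ because $\|p+v\|=\|P(z+p+v)\|\le\|z+p+v\|$. So everything reduces to showing $\sup_{v\in C}\|p+v\|=1$, the supremum being approached through relative-interior points, which satisfy the strict inequality. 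To produce flat vectors of norm close to $1$ I would feed the relatively weakly open, convex set $\Omega=\{x\in B_X\colon|\mu_j(x)|<\beta,\ |\xs_i(x)-a_i|<\beta\}$ (the $\mu_j$ being the finitely many functionals cutting out $\ran P$) to the D$2$P, using that a set of diameter $2$ contains, for any $\eta>0$, two points $u,u'$ and a norming $\xi\in S_{\Xs}$ with $\xi(u)>1-\eta$ and $\xi(u')<-1+\eta$; this yields vectors of norm near $1$ that are $\beta$-close to $\mathcal F_n$ and can then be corrected onto $\mathcal F_n$ (the correction has norm $O(\beta)$ since the constraints have finite codimension).

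The hard part will be the upper bound $\|z+x_n\|<1$ enforced \emph{simultaneously} with $\|x_n\|\to1$. This is where the two hypotheses genuinely interact: it is a uniform condition over all of $B_{\Xs}$ (a supremum, not a single slice), so it cannot be read off one application of diameter~$2$, and it is precisely what fails for $\ell_1$-type splittings $\ran P\oplus_1\ker P$ — there the functional dual to $\ker P$ produces arbitrarily small slices and forces $\|z+x\|=\|x\|+\|z\|$, contradicting the Claim, while the D$2$P forbids such small slices. The mechanism I expect to make it work is to steer the point produced above so that its near-norming functionals can all be taken in $\ran P^{\ast}=(\ker P)^\perp$: every $\psi\in B_{\Xs}$ splits as $\psi=P^{\ast}\psi+(I-P^{\ast})\psi$ with $P^{\ast}\psi\in(\ker P)^\perp$ (hence $P^{\ast}\psi(z)=0$) and $(I-P^{\ast})\psi\in(\ran P)^\perp$ (hence vanishing on the flat direction $V_n$), so that keeping $\|z+x_n\|=\sup_\psi\bigl[(I-P^{\ast})\psi(z)+P^{\ast}\psi(x_n)\bigr]$ below $1$ reduces to controlling the contribution of the finite-dimensional $\ker P$-part while exploiting the diameter~$2$ flatness in the infinite-dimensional directions of $V_n$. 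Closing this estimate — reconciling the target values $a_i$, which break the symmetry $x\mapsto-x$ that the antipodal witnesses rely on, with the requirement that adding the fixed, non-small vector $z$ does not push the norm above $1$ — is the step I expect to be the main obstacle, and the one in which the norm-one projection onto a finite-codimensional range is indispensable.
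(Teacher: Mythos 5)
Your recursive framework coincides with the paper's: the flat $V_n=\ran P\cap\bigcap_{i<n}\ker\xs_i$, the finite\-/codimension correction, and the identity $\|p+v\|=\|P(z+p+v)\|\le\|z+p+v\|$ are exactly the ingredients used there, and the crux lemma you isolate is precisely \cite[Proposition~2.7]{ALNT}, which the paper invokes as a black box. The problem is that you do not prove this lemma, and you say so yourself. Applying the D$2$P to your set $\Omega$, a weak neighbourhood of the flat $x_{n-1}+V_n$, produces (after correction) points of $\mathcal{F}_n$ of norm close to $1$, but it gives no control whatsoever on $\|z+\cdot\|$ at those points; and the ``mechanism'' you then sketch --- splitting $\psi=P^\ast\psi+(I-P^\ast)\psi$ and steering near-norming functionals into $\ran P^\ast$ --- is never carried out. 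So the induction step is not closed; this is a genuine gap, not a routine verification.

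The missing idea is simple but decisive: centre the D$2$P application at $z+x_{n-1}$, not at $x_{n-1}$. Since $\|z+x_{n-1}\|<1$, the set $\bigl\{x\in B_X\colon |\mu_j(x-z-x_{n-1})|<\beta,\ |\xs_i(x-z-x_{n-1})|<\beta\ (i<n)\bigr\}$ is a nonempty relatively weakly open subset of $B_X$, hence has diameter $2$; after your finite-codimension correction and a contraction toward the interior point $z+x_{n-1}$, one obtains $y_1,y_2\in V_n$ with $\|z+x_{n-1}+y_k\|<1$ for $k=1,2$ and $\|y_1-y_2\|>2-\varepsilon_n$ (this is exactly the content of \cite[Proposition~2.7]{ALNT}). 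Now both of your requirements hold simultaneously, with no tension to resolve: $\|x_{n-1}+y_k\|=\|P(z+x_{n-1}+y_k)\|\le\|z+x_{n-1}+y_k\|<1$ gives condition (1) and membership in $B_X\cap\ran P$, while $\|(x_{n-1}+y_1)-(x_{n-1}+y_2)\|=\|y_1-y_2\|>2-\varepsilon_n$ together with both norms being $<1$ forces, by the triangle inequality, $\|x_{n-1}+y_k\|>1-\varepsilon_n$ for \emph{both} $k$; take $x_n:=x_{n-1}+y_1$ and any norming functional $\xs_n$. In other words, one should not try to manufacture a single large-norm point of the flat and then argue that adding $z$ keeps the norm below $1$; one manufactures \emph{two} far-apart points for which $\|z+\cdot\|<1$ holds by construction, and diametrality plus the norm-one projection force both to be large. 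This also dissolves your worry about the targets $a_i$ breaking the $x\mapsto-x$ symmetry: no antipodal witnesses are needed, only far-apart ones.
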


\noindent%
Indeed, suppose that Claim has been proven.
For every $n\in\N$, defining $y_n:=z+x_n$, one has $x_n,y_n\in B_X$.
Whenever $m,n\in\mathbb{N}$ with $m\geq n$, one has
(keeping in mind that $z\in\ker P$ and $x_m\in\ran P$)
\begin{equation}\label{eq4}
\left\|\tfrac{1}{2}z+x_m\right\|
\geq(P^\ast\xs_n)\left(\tfrac{1}{2}z+x_m\right)
=\xs_n(x_m)
>1-\varepsilon_n;
\end{equation}
thus
\[
\|x_n+y_n\|=\|2x_n+z\|\xrightarrow[n\to\infty]{}2.
\]
On the other hand, $x_n-y_n=-z$ for every $n\in\N$.
Since $-z\not=0$, it follows that $X$ is not URED.

By the Banach--Alaoglu theorem, the sequence $(x_n)_{n=1}^\infty$ has a subnet
weak$^\ast$ converging in $\Xss$ to some $\xss\in B_{\Xss}$.
The corresponding subnet of $(z+x_n)_{n=1}^\infty$ converges to $z+\xss$ weak$^\ast$ in $\Xss$.
From (1) it follows that $\|z+\xss\|\leq1$,
thus the line segment $\{\lambda z+\xss\colon \lambda\in[0,1]\}$ (of length $\|z\|=1-\delta$)
is in $B_{\Xss}$. In order to see that this line segment is, in fact, in the unit sphere of $\Xss$,
it suffices to observe that, by \eqref{eq4}, for every $n\in\N$,
\[
\bigl\|\tfrac{1}{2}z+\xss\bigr\|
\geq\bigl(\tfrac{1}{2}z+\xss\bigr)(P^\ast\xs_n)
\geq1-\eps_n,
\]
and thus $\bigl\|\tfrac{1}{2}z+\xss\bigr\|=1$.

\medskip
It remains to prove Claim.
To this end, define $x_0:=0\in \ran P$, $\xs_0:=0\in\Xs$, and $\varepsilon_0:=2$.
Obviously $\|z+x_0\|<1$ and $\xs_0(x_0)=0>1-\varepsilon_0$.
The desired sequences $(x_n)_{n=1}^\infty$ and $(\xs_n)_{n=1}^\infty$ will be defined recursively as follows.

Suppose that, for some $n\in\{0\}\cup\N$,
we have defined $x_0,\dotsc,x_n \in B_X\cap\ran P$ and $\xs_0,\dotsc,\xs_n\in B_{\Xs}$
so that $\|z+x_i\|<1$ for all $i\in\{0,\dotsc,n\}$
and $\xs_i(x_j)=\xs_i(x_i)>1-\varepsilon_i$ whenever $i,j\in\{0,\dotsc, n\}$ with $j\geq i$,
but $x_{n+1}$ and $\xs_{n+1}$ have not been defined yet.
By \cite[Proposition 2.7]{ALNT}
there are $y_1,y_2\in \ran P\cap\bigcap_{i=0}^n \ker \xs_i$
such that $\|z+x_n+y_k\|<1$, $k=1,2$, and $\|y_1-y_2\|>2-\varepsilon_{n+1}$. Since
\[
\|x_n+y_k\|
=\|P(z+x_n+y_k)\|
\leq \|P\|\,\|z+x_n+y_k\|<1,
\quad k=1,2,
\]
and
\[
\|(x_n+y_1)-(x_n+y_2)\|
=\|y_1-y_2\|
>2-\varepsilon_{n+1},
\]
it follows that $1>\|x_n+y_k\|>1-\varepsilon_{n+1}$, $k=1,2$.
Therefore, defining $x_{n+1}:=x_n+y_1$, we have $x_{n+1}\in B_X\cap\ran P$
with $\|z+x_{n+1}\|=\|z+x_n+y_1\|<1$.
Since $\|x_{n+1}\|=\|x_n+y_1\|>1-\varepsilon_{n+1}$,
we can pick an $\xs_{n+1}\in B_{\Xs}$ with $\xs_{n+1}(x_{n+1})>1-\varepsilon_{n+1}$.
It remains to observe that $\xs_i(x_{n+1})=\xs_i(x_n)=\xs_i(x_i)$ for every $i\in\{0,\dotsc,n\}$
because $x_{n+1}-x_n=y_1\in \bigcap_{i=0}^n\ker\xs_i$.
\end{proof}

\section{Notes and remarks}

\begin{rem}\label{remark1}
By invoking Choquet's lemma as in \cite[Proposition~1.3]{AHNTT},
in Proposition~\ref{prop:rommet_har_D2P} it suffices, in fact, to prove that
every slice of the closed unit ball of $X$ has diameter $2$.
However, the proof does not become easier.
\end{rem}

\begin{rem}\label{remark2}
We do not know if it is possible to renorm $L_1[0,1]$ in such a way that it is MLUR and has the D$2$P.
For the renorming done here, it can be seen that the unit ball contains no strongly extreme points.
\end{rem}

\begin{rem}\label{remark3}
We do not know whether there exists a Banach space $X$ with the D$2$P
such that its bidual $\Xss$ is strictly convex
(cf. Theorem \ref{thm: norm-one projection with finite-co-dimensional range, URED, and D2P}).
\end{rem}

Concerning Remark \ref{remark3}, the analogous question for the \emph{strong diameter~$2$ property}
was answered in the negative in \cite{ALNT}
(see Theorem \ref{thm: (a) X* is strictly convex => X* fails the w*-SD2P; (b) ...} below).
Recall the relevant definitions.
Throughout, until the end of the paper, $X$ will be a Banach space.

\begin{definition}
The space $X$ (or its norm) has the \emph{strong diameter~$2$ property} (SD$2$P for short)
if every finite convex combination of slices of $B_X$ has diameter $2$.
If $X$ is a dual space and every finite convex combination of weak$^\ast$ slices
of $B_X$ has diameter $2$, then $X$ has the \emph{weak$^\ast$ strong diameter~$2$ property} ($w^\ast$-SD$2$P for short).
\end{definition}

\noindent%
Using Goldstine’s theorem, it is not difficult to see that \emph{$X$ has the SD$2$P if and only if the bidual $\Xss$ has the $w^\ast$-SD$2$P.}

\begin{definition}
The space $X$ (or its norm) is \emph{octahedral} (OH for short) if, for every finite dimensional subspace $E$ of $X$
and every $\eps>0$, there is a $y\in S_X$  such that
\[
\|x+\alpha y\|\geq(1-\eps)(\|x\|+|\alpha|)
\quad\text{for every $x\in E$ and every $\alpha\in\R$.}
\]
\end{definition}

\noindent%
There is a duality between octahedrality and the SD$2$P:
by a result of Godefroy \cite[Remark II.5, 2]{G}
(see \cite[Theorem 2.1]{BG&L-P&RZ} for a proof)
\emph{$X$ is OH if and only if $\Xs$ has the $w^\ast$-SD$2$P.}

\begin{definition}
The space $X$ contains an \emph{asymptotically isometric copy of $\ell_1$} if there are a sequence $(x_k)_{k=1}^\infty$ in $X$
and a sequence $(\delta_k)_{k=1}^\infty$ of real numbers in the interval $(0,1)$ with $\delta_k\xrightarrow[k\to\infty]{}0$
such that, whenever $m\in\mathbb{N}$ and $\alpha_1,\dotsc,\alpha_m\in\R$, one has
\begin{equation}\label{eq: asympotically isomorphic to ell_1}
\sum_{k=1}^{m}(1-\delta_k)|\alpha_k|
\leq\biggl\|\sum_{k=1}^m \alpha_k x_k\biggr\|
\leq\sum_{k=1}^{m}|\alpha_k|.
\end{equation}
\end{definition}

The following result was proven in \cite{ALNT}.

\begin{thm}\label{thm: (a) X* is strictly convex => X* fails the w*-SD2P; (b) ...}
\begin{itemize}
\item[\textup{(a)}]
If $\Xs$ is strictly convex, then $\Xs$ fails the $w^\ast$-SD$2$P.
\item[\textup{(b)}]
If $\Xss$ is strictly convex, then $X$ fails the SD$2$P.
\end{itemize}
\end{thm}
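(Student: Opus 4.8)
The plan is to derive part~(b) from part~(a) and to concentrate the real work on~(a). For~(b) I would use the duality recorded just above the theorem: $X$ has the SD$2$P if and only if $\Xss$ has the $w^\ast$-SD$2$P. Applying part~(a) to the Banach space $Y:=\Xs$ (so that $Y^\ast=\Xss$), strict convexity of $\Xss$ gives that $\Xss$ fails the $w^\ast$-SD$2$P, and by the displayed duality this says exactly that $X$ fails the SD$2$P. Hence (b) needs no separate argument once (a) is available, and I would open the proof with this reduction.

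For~(a) I would argue contrapositively and lean on Godefroy's duality: $\Xs$ has the $w^\ast$-SD$2$P precisely when $X$ is octahedral. Thus proving~(a) amounts to showing that an octahedral space $X$ has a dual that is \emph{not} strictly convex, i.e.\ that $S_{\Xs}$ contains a nondegenerate line segment. The engine is octahedrality applied repeatedly: starting from any $y_1\in S_X$ and, at stage $n$, applying the octahedrality inequality to the finite-dimensional space $E_n=\linspan\{y_1,\dots,y_n\}$ with a rapidly (say summably) decreasing $\eps_{n+1}$, one produces $y_{n+1}\in S_X$ with $\|u+\alpha y_{n+1}\|\ge(1-\eps_{n+1})(\|u\|+|\alpha|)$ for all $u\in E_n$ and $\alpha\in\R$. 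Telescoping these estimates yields a normalized sequence $(y_k)_{k=1}^\infty$ with $\|\sum_k\alpha_k y_k\|\ge\sum_k(1-\delta_k)|\alpha_k|$ and $\delta_k\to0$ — that is, an asymptotically isometric copy of $\ell_1$ inside $X$ (this is why that notion is isolated above), where the total error $S=\sum_k\delta_k$ can be made as small as desired.

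From this $\ell_1$-system I would manufacture the segment in $S_{\Xs}$. For a sign pattern $\eta\in\{-1,1\}^{\N}$ choose $F^{(n)}_\eta\in B_{\Xs}$ norming the signed partial sum $\sum_{k\le n}\eta_k y_k$, and let $f_\eta\in B_{\Xs}$ be a weak$^\ast$ cluster point of $(F^{(n)}_\eta)_n$. The asymptotically isometric estimate forces $\sum_{k\le n}(1-\eta_k F^{(n)}_\eta(y_k))\le S$, so each coordinate value $\eta_k f_\eta(y_k)$ lies within $S$ of~$1$. Taking two patterns that agree in the first coordinate but disagree in a later fixed one (e.g.\ $\eta=(1,1,1,\dots)$ and $\eta'=(1,-1,-1,\dots)$) produces $f,g\in S_{\Xs}$ for which, tested on the fixed vector $y_1$, the midpoint $\tfrac12(f+g)$ has norm close to~$1$, while, tested on~$y_2$, the difference $\|f-g\|$ is close to~$2$; in particular $f\ne g$. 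As $S\to0$ this is an arbitrarily good approximate segment in $S_{\Xs}$.

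The main obstacle is precisely the passage from these approximate segments to an \emph{exact} one: strict convexity is a non-quantitative property, and a strictly convex space may perfectly well contain near-segments. The naive remedy — taking a weak$^\ast$ limit as $S\to0$ over a sequence of such constructions — fails because the testing vectors $y_1,y_2$ change from one construction to the next, and the norm is only weak$^\ast$ lower semicontinuous, so the very ``spread'' distinguishing $f$ from $g$ can collapse in the limit. The fix I would pursue is to run a \emph{single} construction with $\sum_k\delta_k$ finite and then extract, by weak$^\ast$ compactness of $B_{\Xs}$, two distinct functionals whose common supporting element of $\Xss$ (a Goldstine-type weak$^\ast$ limit of the $y_k$) certifies that their midpoint is genuinely of norm one; packaged cleanly, this is the assertion that a Banach space containing an asymptotically isometric copy of $\ell_1$ has a non-strictly-convex dual. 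Making this exactness step rigorous is the crux of the whole argument.
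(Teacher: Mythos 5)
Your high-level route is the same as the paper's: deduce (b) from (a) via the Goldstine duality, prove (a) contrapositively via Godefroy's duality (so that it suffices to show an octahedral space has a non-strictly-convex dual), and factor that through the implication OH $\Rightarrow$ asymptotically isometric copy of $\ell_1$. But the decisive step --- from an asymptotically isometric copy of $\ell_1$ to an actual witness that $\Xs$ is not strictly convex --- is exactly where your argument stops, and you say so yourself (``making this exactness step rigorous is the crux''). The gap is real: your construction (Hahn--Banach functionals $F^{(n)}_\eta$ norming signed partial sums, then weak$^\ast$ cluster points $f_\eta$) only yields $\eta_k f_\eta(y_k)\geq 1-S$ with $S=\sum_k\delta_k$ a \emph{fixed} positive number, because the total error $\sum_{k\leq n}\bigl(1-\eta_k F^{(n)}_\eta(y_k)\bigr)\leq\sum_{k\leq n}\delta_k$ may concentrate entirely on one coordinate; so the per-coordinate estimate cannot be pushed below $1-S$ no matter how large $n$ is. Consequently you only get $\bigl\|\tfrac{1}{2}(f+g)\bigr\|\geq 1-S$ and $\|f-g\|\geq 2(1-S)$, an approximate segment, which (as you correctly note) is perfectly compatible with strict convexity. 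Your sketched repair does not close this: a weak$^\ast$ cluster point $\yss$ of the $y_k$ in $\Xss$ still satisfies only $\tfrac{1}{2}(f+g)(\yss)\geq 1-S$, never $=1$, since each individual $f(y_k),g(y_k)$ is only known to exceed $1-S$.

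The paper's proof of the key step avoids this by \emph{prescribing} the values of the functionals rather than harvesting them from norming functionals of partial sums: on $\linspan\{x_1,\dotsc,x_{2m}\}$ define $f_m(x_{2i-1})=1-\delta_{2i-1}$, $g_m(x_{2i-1})=-(1-\delta_{2i-1})$, and $f_m(x_{2i})=g_m(x_{2i})=1-\delta_{2i}$; the lower $\ell_1$-estimate shows directly that $\|f_m\|,\|g_m\|\leq 1$, so Hahn--Banach extensions lie in $B_{\Xs}$, and weak$^\ast$ limit points $\xs,\ys$ then attain these values \emph{exactly} on every $x_k$. Since $\delta_k\to 0$, testing on $x_{2i}$ with $i\to\infty$ gives $\|\xs\|=\|\ys\|=\bigl\|\tfrac{1}{2}(\xs+\ys)\bigr\|=1$, and testing on $x_{2i-1}$ gives $\|\xs-\ys\|=2$: an exact segment of length $2$ in $S_{\Xs}$, with no need for $\sum_k\delta_k$ to be small or even finite. (Your scheme could also be salvaged by testing the midpoint and the difference against normalized block averages $\frac{1}{|B|}\sum_{k\in B}\eta_k y_k$, on which the values are at least $1-S/|B|\to 1$; but that additional idea, or the paper's prescribed-values trick, is precisely what your write-up is missing.)
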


\noindent%
By the aforementioned duality between octahedrality and the SD$2$P,
Theorem \ref{thm: (a) X* is strictly convex => X* fails the w*-SD2P; (b) ...}
is an immediate consequence of the following theorem.

\begin{theorem}[see the proof of {\cite[Theorem 2.5]{ALNT}} and the sentence following it]%
\label{thm: X* is R => X is not OH}
If $\Xs$ is strictly convex, then $X$ is not OH.
\end{theorem}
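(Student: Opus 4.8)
The plan is to derive a contradiction from the assumption that $X$ is octahedral while $\Xs$ is strictly convex. Since strict convexity of $\Xs$ means its unit sphere contains no nondegenerate line segment, it suffices to produce two \emph{distinct} functionals $F,G\in S_{\Xs}$ with $\bigl\|\tfrac{F+G}{2}\bigr\|=1$. Octahedrality hands this to us \emph{asymptotically} at once: for $x\in S_X$ and small $\eps$ it yields $y\in S_X$ with $\|x\pm y\|\ge 2(1-\eps)$, so norming functionals $f^{\pm}$ of $x\pm y$ satisfy $\|\tfrac{f^{+}+f^{-}}{2}\|\ge 1-2\eps$ and $\|f^{+}-f^{-}\|\ge 2(1-2\eps)$. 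The difficulty — and the whole point — is that mere strict convexity carries no uniform modulus, so such asymptotic data is \emph{not} a contradiction: on passing to weak$^\ast$ limits the separating direction $y=y_\eps$ escapes and the two limit functionals collapse to one. Producing an \emph{exact} segment is therefore the main obstacle, and I would overcome it by a coherent recursion in the spirit of the proof of Theorem~\ref{thm: norm-one projection with finite-co-dimensional range, URED, and D2P} above, with octahedrality playing the role that the D$2$P played there.

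Concretely, fix once and for all a \emph{witness} $v\in S_X$, seeds $\phi_0,\psi_0\in S_{\Xs}$ with $\phi_0(v)=1$ and $\psi_0(v)=-1$, and positive reals $\eps_n$ with $\sum_n\eps_n<\infty$. Suppose $\phi_n,\psi_n\in B_{\Xs}$ and $w_1,\dots,w_n\in S_X$ have been constructed, and put $M_n=\linspan\{v,w_1,\dots,w_n\}$. Applying octahedrality to the finite-dimensional subspace $M_n$ and to $\eps_{n+1}$ produces $y\in S_X$ (necessarily $y\notin M_n$) with $\|m+\alpha y\|\ge(1-\eps_{n+1})(\|m\|+|\alpha|)$ for all $m\in M_n$, $\alpha\in\R$; I set $w_{n+1}:=y$ and \emph{define} $\phi_{n+1}$ on $M_n\oplus\linspan\{y\}$ by $\phi_{n+1}(m+\alpha y)=(1-\eps_{n+1})\bigl(\phi_n(m)+\alpha\bigr)$, and likewise $\psi_{n+1}(m+\alpha y)=(1-\eps_{n+1})\bigl(\psi_n(m)+\alpha\bigr)$. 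The fattening estimate is exactly what makes these contractive, since $|\phi_{n+1}(m+\alpha y)|\le(1-\eps_{n+1})(\|m\|+|\alpha|)\le\|m+\alpha y\|$, so Hahn--Banach extends $\phi_{n+1},\psi_{n+1}$ to elements of $B_{\Xs}$. By construction $\phi_{n+1}(w_{n+1})=\psi_{n+1}(w_{n+1})=1-\eps_{n+1}$, while on the old subspace both functionals are merely rescaled: $\phi_{n+1}|_{M_n}=(1-\eps_{n+1})\phi_n|_{M_n}$ and similarly for $\psi$.

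Finally I would pass to the limit. By the Banach--Alaoglu theorem the sequence $(\phi_n,\psi_n)$ has a subnet along which $\phi_n\to F$ and $\psi_n\to G$ weak$^\ast$ in $B_{\Xs}$. The rescaling telescopes, so for each fixed index $i$ one has $\phi_n(w_i)=\psi_n(w_i)=\prod_{m=i}^{n}(1-\eps_m)$ for all $n\ge i$, whence $F(w_i)=G(w_i)=\prod_{m\ge i}(1-\eps_m)$; since $\sum_n\eps_n<\infty$, these tail products increase to $1$ as $i\to\infty$. Consequently $\|F\|=\|G\|=1$, and $\tfrac{F+G}{2}(w_i)=\prod_{m\ge i}(1-\eps_m)\to 1$, so $\bigl\|\tfrac{F+G}{2}\bigr\|=1$ as well. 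On the other hand the seed signs survive every rescaling, $F(v)=\prod_{m\ge 1}(1-\eps_m)=-G(v)\ne 0$, so $F\ne G$. Thus $\tfrac{F+G}{2}$ is a norm-one midpoint of two distinct points of $S_{\Xs}$, contradicting strict convexity of $\Xs$; hence $X$ is not octahedral. (Consistently with the smoothness of $X$ forced by strict convexity of $\Xs$, the midpoint $\tfrac{F+G}{2}$ attains its norm at no point of $S_X$; it is certified only in the limit along the escaping sequence $(w_i)$, which is precisely why the exact recursion, rather than the one-shot asymptotic estimate, is needed.)
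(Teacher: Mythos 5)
Your proof is correct, but it is organized differently from the paper's. The paper derives this theorem by factoring it through the notion of an asymptotically isometric copy of $\ell_1$: first, octahedrality yields such a copy (Proposition~\ref{prop: X is OH => X contains an asympt. ell_1}, proved by essentially the same recursion you use --- vectors chosen successively so that $\|x+\alpha x_m\|\ge(1-\eps_m)(\|x\|+|\alpha|)$ on the span of the previous ones, with the telescoping products $\prod_{i=k}^m(1-\eps_i)$ kept above $1-\delta_k$); then, in a separate step, functionals are prescribed \emph{after the fact} on the resulting sequence ($f_m$ and $g_m$ agreeing on even-indexed vectors and differing in sign on odd-indexed ones), shown contractive via the $\ell_1$-type lower estimate, extended by Hahn--Banach, and weak$^\ast$ limit points give two functionals at distance $2$ whose midpoint has norm one. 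You instead run a single recursion that builds the vectors $w_n$ and the functionals $\phi_n,\psi_n$ simultaneously, with the rescaling $\phi_{n+1}|_{M_n}=(1-\eps_{n+1})\phi_n|_{M_n}$, and distinctness of the weak$^\ast$ limits certified by the fixed witness $v$ on which the seeds have opposite signs; your contractivity estimate, the telescoping products, and the final convexity argument are all sound. The trade-off: the paper's modular route produces byproducts of independent interest --- the $\ell_1$-copy statement, a line segment of length exactly $2$ in $S_{\Xs}$, and non-smoothness of $\Xs$ --- whereas your merged recursion is shorter and self-contained for the stated theorem but yields a segment only of length $2\prod_{m\ge1}(1-\eps_m)$ (nondegenerate, which is all that strict convexity requires). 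One trivial point to repair: you should demand $\eps_n\in(0,1)$, not merely positive and summable; if some $\eps_n\ge1$ the infinite products could vanish (or the octahedrality estimate become vacuous), and then $F(v)=G(v)=0$ would defeat your distinctness argument. Taking, say, $\eps_n=2^{-n}$ settles this.
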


\noindent%
In \cite{ALNT}, Theorem \ref{thm: X* is R => X is not OH}
was observed as a(n obvious) consequence of the following two results.

\begin{proposition}[see {\cite[Lemma 2.3]{ALNT}}]\label{prop: X is OH => X contains an asympt. ell_1}
If $X$ is OH, then it contains an asymptotically isometric copy of $\ell_1$.
\end{proposition}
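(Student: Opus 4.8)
The plan is to build the sequence $(x_k)_{k=1}^\infty$ recursively, adding one vector at a time via octahedrality while keeping uniform control over all the $\ell_1$-type lower bounds. First I would fix an auxiliary sequence of positive reals $(\eps_i)_{i=1}^\infty$ with $\sum_{i=1}^\infty\eps_i<\infty$ (so that $\prod_{i=1}^\infty(1-\eps_i)>0$), and define $\delta_j:=1-\prod_{i=j}^\infty(1-\eps_i)$. Then $\delta_j\in(0,1)$ for all $j$, and $\delta_j\to 0$ because the tail products tend to $1$. The freedom to choose $(\delta_k)$ is what makes this work: the definition only requires the existence of \emph{some} null sequence in $(0,1)$ for which \eqref{eq: asympotically isomorphic to ell_1} holds, so I am free to read off the $\delta_j$ from the construction rather than prescribe them in advance.

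Next I would carry out the recursion. Pick $x_1\in S_X$ arbitrarily. Given $x_1,\dotsc,x_n\in S_X$, apply octahedrality to the finite-dimensional subspace $E_n:=\linspan\{x_1,\dotsc,x_n\}$ with $\eps=\eps_{n+1}$ to obtain $x_{n+1}\in S_X$ satisfying
\[
\|x+\alpha x_{n+1}\|\geq(1-\eps_{n+1})\bigl(\|x\|+|\alpha|\bigr)
\qquad\text{for all $x\in E_n$ and $\alpha\in\R$.}
\]
This is legitimate since each $E_n$ is finite-dimensional, which is exactly the hypothesis octahedrality is designed to exploit.

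Then I would establish the lower bound in \eqref{eq: asympotically isomorphic to ell_1} by induction on $m$. Writing $S_m:=\bigl\|\sum_{k=1}^m\alpha_k x_k\bigr\|$ and applying the displayed inequality (for the vector $x_m$, chosen via octahedrality on $E_{m-1}$) with $x=\sum_{k=1}^{m-1}\alpha_k x_k\in E_{m-1}$ and $\alpha=\alpha_m$ yields the recursion $S_m\geq(1-\eps_m)\bigl(S_{m-1}+|\alpha_m|\bigr)$. Unrolling this telescoping estimate, the coefficient of each $|\alpha_j|$ in the resulting bound is a tail product of the $(1-\eps_i)$, hence at least $\prod_{i=j}^\infty(1-\eps_i)=1-\delta_j$; summing gives $S_m\geq\sum_{k=1}^m(1-\delta_k)|\alpha_k|$. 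The upper bound in \eqref{eq: asympotically isomorphic to ell_1} is immediate from the triangle inequality together with $\|x_k\|=1$.

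The only genuinely delicate point is securing \emph{asymptotic} isometry rather than mere isomorphic equivalence to the $\ell_1$-basis. A naive use of octahedrality produces only the single uniform constant $\prod_{i}(1-\eps_i)<1$, which would give an isomorphic (not asymptotically isometric) copy of $\ell_1$; the improvement comes from observing that in the unrolled recursion the $j$-th coordinate sees only the \emph{tail} product $\prod_{i\geq j}(1-\eps_i)$, and these tails increase to $1$. This is precisely why the summability of $(\eps_i)$ and the tail definition of $\delta_j$ are chosen as above. The bookkeeping in the unrolling — verifying that $|\alpha_j|$ genuinely acquires the tail product $\prod_{i=j}^m(1-\eps_i)$ and no worse factor — is routine but is the one place where care is needed.
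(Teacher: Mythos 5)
Your proposal is correct and follows essentially the same route as the paper's proof: recursively apply octahedrality to $\linspan\{x_1,\dotsc,x_n\}$ with parameter $\eps_{n+1}$, then unroll the resulting recursion $S_m\geq(1-\eps_m)(S_{m-1}+|\alpha_m|)$ so that each $|\alpha_j|$ acquires the tail product $\prod_{i=j}^m(1-\eps_i)$. The only difference is the direction of the bookkeeping --- you fix a summable $(\eps_i)$ and read off $\delta_j$ from the tail products, whereas the paper prescribes the null sequence $(\delta_k)$ first and then chooses $(\eps_i)$ with $\prod_{i=k}^m(1-\eps_i)>1-\delta_k$ --- and, as you note, this is immaterial because the definition only requires \emph{some} null sequence $(\delta_k)$.
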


\begin{theorem}\label{thm: X contains an asympt. copy of ell_1 => X^ast is not R}
If $X$ contains an asymptotically isometric copy of $\ell_1$, then $\Xs$ is not strictly convex.
\end{theorem}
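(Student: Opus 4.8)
The plan is to exhibit two \emph{distinct} norm-one functionals $f,g\in S_{\Xs}$ with $\bigl\|\tfrac{f+g}{2}\bigr\|=1$, which is precisely the failure of strict convexity. Throughout let $(x_k)_{k=1}^\infty$ and $(\delta_k)_{k=1}^\infty$ be as in the definition, and set $Y=\overline{\linspan}\{x_k\colon k\in\N\}$. First I would record two consequences of \eqref{eq: asympotically isomorphic to ell_1}: taking a single nonzero coefficient gives $1-\delta_k\le\|x_k\|\le1$, and the lower estimate shows the $x_k$ are linearly independent (if $\sum_j\gamma_j x_j=0$ then $\sum_j(1-\delta_j)|\gamma_j|\le0$, so every $\gamma_j=0$). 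Hence every finite combination $\sum_j\alpha_j x_j$ has well-defined coordinates.

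The engine of the proof is a family of functionals on $Y$ depending \emph{linearly} on a coefficient sequence. For a bounded sequence $c=(c_k)$ with $|c_k|\le1-\delta_k$ for all $k$, define $\Phi_c$ on the linear span of the $x_k$ by $\Phi_c\bigl(\sum_j\alpha_j x_j\bigr)=\sum_j c_j\alpha_j$. By linear independence this is well defined, and the lower estimate in \eqref{eq: asympotically isomorphic to ell_1} gives $\bigl|\sum_j c_j\alpha_j\bigr|\le\sum_j(1-\delta_j)|\alpha_j|\le\bigl\|\sum_j\alpha_j x_j\bigr\|$, so $\Phi_c$ extends to an element of $Y^\ast$ with $\|\Phi_c\|\le1$. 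The point is that $c\mapsto\Phi_c$ is linear, so $\Phi_c\pm\Phi_d=\Phi_{c\pm d}$; this is what will let me keep \emph{exact} control after perturbing.

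Next I would make the two choices that create a genuine segment while pinning the norm to $1$. Put $c_k=1-\delta_k$ for odd $k$ and $c_k=0$ for even $k$; put $d_k=0$ for odd $k$ and $d_k=\tfrac12(1-\delta_k)$ for even $k$. Then $|c_k\pm d_k|\le1-\delta_k$ for every $k$, so $\|\Phi_{c\pm d}\|\le1$ by the previous step. The odd coordinates force $\|\Phi_c\|_{Y^\ast}=1$: indeed $\Phi_c(x_k)=c_k=1-\delta_k$ and $\|x_k\|\le1$ for odd $k$, so $\|\Phi_c\|\ge1-\delta_k$ for all odd $k$, whence $\|\Phi_c\|\ge1$ because $\delta_k\to0$. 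The even coordinates keep the perturbation alive: $\Phi_d(x_k)=\tfrac12(1-\delta_k)\ne0$ for even $k$, so $\Phi_d\ne0$.

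Finally I would transfer the segment from $Y^\ast$ to $\Xs$ by applying Hahn--Banach to $\Phi_{c+d}$ and $\Phi_{c-d}$ separately: choose $f,g\in\Xs$ extending them with $\|f\|,\|g\|\le1$. Setting $F=\tfrac12(f+g)$, one has $F|_Y=\tfrac12(\Phi_{c+d}+\Phi_{c-d})=\Phi_c$, so $\|F\|\ge\|\Phi_c\|_{Y^\ast}=1$, while $\|F\|\le\tfrac12(\|f\|+\|g\|)\le1$; thus $\|F\|=1$, and the chain $1=\|F\|\le\tfrac12\|f\|+\tfrac12\|g\|\le1$ forces $\|f\|=\|g\|=1$. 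Since $f|_Y=\Phi_{c+d}\ne\Phi_{c-d}=g|_Y$ (their difference is $2\Phi_d\ne0$), the functionals $f$ and $g$ are distinct, and $\bigl\|\tfrac{f+g}{2}\bigr\|=\|F\|=1$, so $\Xs$ is not strictly convex. The one point demanding care --- and the reason for working on $Y$ rather than on $X$ --- is that Hahn--Banach extensions are not linear; performing the perturbation at the level of the linear assignment $c\mapsto\Phi_c$ on $Y$ and extending only at the very end is exactly what yields the \emph{exact} equalities $\|f\|=\|g\|=\bigl\|\tfrac{f+g}{2}\bigr\|=1$ rather than merely approximate ones.
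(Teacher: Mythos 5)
Your proof is correct, and it reaches the conclusion by a genuinely different assembly than the paper's. Both arguments share the same core idea: the lower estimate in \eqref{eq: asympotically isomorphic to ell_1} makes ``coordinate'' functionals on the $x_k$ bounded, and one builds two functionals that \emph{agree} on a set of coordinates where $\delta_k\to0$ (which pins the norm of the midpoint at $1$) and \emph{differ} on the complementary coordinates (which gives distinctness). The difference is in how the functionals are globalised. The paper prescribes values only on the finite-dimensional spans $\linspan\{x_1,\dotsc,x_{2m}\}$, takes a norm-preserving Hahn--Banach extension for each $m$, and then passes to weak$^\ast$ cluster points via Banach--Alaoglu; the sign pattern $f_m(x_{2i-1})=1-\delta_{2i-1}$, $g_m(x_{2i-1})=-1+\delta_{2i-1}$, $f_m(x_{2i})=g_m(x_{2i})=1-\delta_{2i}$ then yields the stronger quantitative conclusions $\|\xs-\ys\|=2$ (a segment of length $2$ in $S_{\Xs}$) and, with a little extra work, non-smoothness of $\Xs$. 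You instead define the functionals $\Phi_c$ once and for all on the closed span $Y$ (using linear independence, which you correctly extract from the lower estimate), exploit the linearity of $c\mapsto\Phi_c$ to keep exact control under perturbation, and apply Hahn--Banach only at the very end; the non-linearity of Hahn--Banach extensions is neutralised by your observation that $\tfrac12(f+g)$ restricts on $Y$ to $\Phi_c$ of norm one, which forces $\|f\|=\|g\|=\bigl\|\tfrac12(f+g)\bigr\|=1$. This buys you a proof with no compactness or limit-point argument at all --- only density and Hahn--Banach --- at the cost of a weaker quantitative output: as written your segment in $S_{\Xs}$ has length only about $1$ (since $f-g=2\Phi_d$ and $\Phi_d(x_k)=\tfrac12(1-\delta_k)$ on even $k$), though taking $d_k=1-\delta_k$ on even indices, which still satisfies $|c_k\pm d_k|\le1-\delta_k$, would recover a segment of length $2$ as in the paper.
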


\noindent%
The proof of Theorem \ref{thm: X contains an asympt. copy of ell_1 => X^ast is not R} in \cite{ALNT}
(see the proof of {\cite[Theorem 2.5]{ALNT}} and the sentence following it) relies on \cite[Theorem 2]{DGH}:
if $X$ contains an asymptotically isometric copy of $\ell_1$, then, by \cite[Theorem 2]{DGH},
the dual space $\Xs$ contains an isometric copy of $L_1[0,1]$; since $L_1[0,1]$ is not strictly convex, neither $\Xs$ is
(also, since $L_1[0,1]$ is not smooth, neither $\Xs$ is).

The next proposition
provides a self-contained proof of Theorem \ref{thm: X contains an asympt. copy of ell_1 => X^ast is not R}
(without the help of \cite[Theorem 2]{DGH}).

\begin{proposition}
Suppose that $X$ contains an asymptotically isometric copy of $\ell_1$. Then
\begin{itemize}
\item[\textup{(a)}]
the dual unit sphere $S_{\Xs}$ contains a line segment of length $2$;
\item[\textup{(b)}]
the dual space $\Xs$ is not smooth.
\end{itemize}
\end{proposition}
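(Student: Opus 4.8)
The plan is to manufacture, out of the asymptotic $\ell_1$ structure, two norm-one functionals playing the roles of the ``all $+1$'' and the ``alternating'' coordinate functionals on $\ell_1$, and then to read off both conclusions from how they act on the vectors $x_k$. Write $\delta_k':=\sup_{i\ge k}\delta_i$; since $\delta_i\in(0,1)$ and $\delta_i\to0$, the sequence $(\delta_k')$ is nonincreasing, lies in $(0,1)$, and tends to $0$, while $\delta_k'\ge\delta_k$ for every $k$. The inequality \eqref{eq: asympotically isomorphic to ell_1} shows in particular that $(x_k)$ is linearly independent and that $\|x_k\|\le1$ for every $k$. On $Z:=\linspan\{x_k\colon k\in\N\}$ I would define two linear functionals by
\[
g_0\Bigl(\sum_k\alpha_k x_k\Bigr)=\sum_k(1-\delta_k')\alpha_k,
\qquad
g_1\Bigl(\sum_k\alpha_k x_k\Bigr)=\sum_k(-1)^{k+1}(1-\delta_k')\alpha_k .
\]
Using $(1-\delta_k')\le(1-\delta_k)$ together with the lower estimate in \eqref{eq: asympotically isomorphic to ell_1}, both satisfy $|g_i(\sum_k\alpha_k x_k)|\le\sum_k(1-\delta_k)|\alpha_k|\le\|\sum_k\alpha_k x_k\|$, so $\|g_i\|\le1$; and since $g_i(x_k)=\pm(1-\delta_k')\to\pm1$, in fact $\|g_i\|=1$. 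By the Hahn--Banach theorem I extend $g_0,g_1$ to $f_0,f_1\in S_{\Xs}$ with $f_i(x_k)=g_i(x_k)$ for all $k$.

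For (a) I would show that the segment $[f_0,f_1]$ lies in $S_{\Xs}$ and has length $2$. For $s\in[-1,1]$ the functional $h_s:=\tfrac{1+s}{2}f_0+\tfrac{1-s}{2}f_1$ is a convex combination of $f_0,f_1\in B_{\Xs}$, so $\|h_s\|\le1$; for the reverse inequality I evaluate on $x_k$ with $k$ odd, where $h_s(x_k)=1-\delta_k'$ and $\|x_k\|\le1$, so $\|h_s\|\ge1-\delta_k'\to1$. Hence $\|h_s\|=1$ for every $s\in[-1,1]$, i.e.\ $[f_0,f_1]\subset S_{\Xs}$. Evaluating $f_0-f_1$ on $x_k$ with $k$ even gives $(f_0-f_1)(x_k)=2(1-\delta_k')\to2$, so $\|f_0-f_1\|=2$ and the segment has length $2$.

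For (b) I would exhibit two distinct norming functionals at the single point $f_0$. Let $\phi,\psi\in B_{\Xss}$ be weak$^\ast$ cluster points of the sequences $(x_{2j-1})_j$ and $(x_{2j})_j$, respectively. Because $\langle x_k,f_0\rangle=1-\delta_k'\to1$ along both the odd and the even indices, both $\phi$ and $\psi$ attain the value $1$ at $f_0$; since $\|f_0\|=1$ this forces $\|\phi\|=\|\psi\|=1$, so each of them norms $f_0$. They are distinct because $\langle x_k,f_1\rangle=(-1)^{k+1}(1-\delta_k')$ tends to $+1$ along odd $k$ and to $-1$ along even $k$, whence $\phi(f_1)=1\ne-1=\psi(f_1)$. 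Thus $f_0\in S_{\Xs}$ has two distinct norming functionals in $S_{\Xss}$, and $\Xs$ is not smooth.

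The only genuinely delicate point is arranging that the two functionals have norm exactly $1$ (not merely bounded) while keeping their prescribed values on every $x_k$; the device that makes this work is replacing the coordinate coefficient $1$ by $1-\delta_k'$, which simultaneously keeps $|g_i(\sum_k\alpha_k x_k)|\le\sum_k(1-\delta_k)|\alpha_k|$ (so the lower $\ell_1$-estimate yields $\|g_i\|\le1$) and leaves $g_i(x_k)$ converging to $\pm1$ (so $\|g_i\|=1$ and all the limits above are sharp). Everything else is a convexity estimate together with the weak$^\ast$ lower semicontinuity of the norm, so no further obstacle is expected.
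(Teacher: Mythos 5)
Your proof is correct and is essentially the paper's own argument: both proofs construct the ``all-plus'' and ``alternating'' functionals whose norms are bounded by $1$ via the lower estimate $\sum_k(1-\delta_k)|\alpha_k|\le\bigl\|\sum_k\alpha_k x_k\bigr\|$, show the segment between them lies in $S_{\Xs}$ by evaluating at the $x_k$ on which the two functionals agree, and obtain non-smoothness from weak$^\ast$ cluster points in $\Xss$ of the odd and even subsequences of $(x_k)_{k=1}^\infty$, distinguished by the alternating functional. The only cosmetic differences are that you define the functionals on all of $\linspan\{x_k\colon k\in\N\}$ and apply Hahn--Banach once, whereas the paper defines them on $\linspan\{x_1,\dotsc,x_{2m}\}$, extends norm-preservingly, and takes weak$^\ast$ limit points of the resulting sequences; and your replacement of $\delta_k$ by $\delta_k'=\sup_{i\ge k}\delta_i$ is harmless but unnecessary, since the coefficients $1-\delta_k$ already yield $\|g_i\|\le1$ by the same estimate.
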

\begin{proof}
Since $X$ contains an asymptotically isometric copy of $\ell_1$, there are a sequence $(x_k)_{k=1}^\infty$ in $X$
and a sequence $(\delta_k)_{k=1}^\infty$ of real numbers in the interval $(0,1)$ with $\delta_k\xrightarrow[k\to\infty]{}0$
satisfying \eqref{eq: asympotically isomorphic to ell_1} whenever $m\in\mathbb{N}$ and $\alpha_1,\dotsc,\alpha_m\in\R$.

(a).
For every $m\in\N$, let $f_m,g_m\colon\linspan\{x_1,\dotsc,x_{2m}\}\to\R$ be the linear functionals satisfying
$f_m(x_{2i-1})=1-\delta_{2i-1}$, $g_m(x_{2i-1})=-1+\delta_{2i-1}$, and $f_m(x_{2i})=g_m(x_{2i})=1-\delta_{2i}$
for every $i\in\{1,\dotsc,m\}$, and let $\xs_m\in\Xs$ and $\ys_m\in\Xs$ be some norm-preserving extensions of,
respectively, $f_m$ and $g_m$. Letting $\xs$ and $\ys$ be any  weak$^\ast$ limit points of, respectively,
the sequences $(\xs_m)_{m=1}^\infty$ and $(\ys_m)_{m=1}^\infty$,
one has $\|\xs\|=\|\ys\|=\bigl\|\frac{1}{2}(\xs+\ys)\bigr\|=1$
and $\|\xs-\ys\|=2$, thus the line segment connecting $\xs$ and $\ys$ is in $S_{\Xs}$ and has length $2$.
Indeed, whenever $m\in\N$ and $x=\sum_{k=1}^{2m}\alpha_k x_k\in\linspan\{x_1,\dotsc,x_{2m}\}$, one has
\begin{align*}
|f_m(x)|
&=\biggl|\sum_{k=1}^{2m}\alpha_k f_m(x_k)\biggr|
\leq\sum_{k=1}^{2m}|\alpha_k|\,|f_m(x_k)|
=\sum_{k=1}^{2m}|\alpha_k|\,|1-\delta_k|\\
&\leq\biggl\|\sum_{k=1}^{2m}\alpha_k x_k\biggr\|
=\|x\|
\end{align*}
and, similarly, $|g_m(x)|\leq\|x\|$, thus $\|\xs_m\|=\|f_m\|\leq1$ and  $\|\ys_m\|=\|g_m\|\leq1$,
and it follows that $\|\xs\|\leq1$ and $\|\ys\|\leq1$.

On the other hand, whenever $i,m\in\N$ with $m\geq i$, one has
\begin{alignat*}{2}
\xs_m(x_{2i})&=f_m(x_{2i})=1-\delta_{2i},&\qquad
\ys_m(x_{2i})&=g_m(x_{2i})=1-\delta_{2i},\\
\xs_m(x_{2i-1})&=f_m(x_{2i-1})=1-\delta_{2i-1},&\qquad
\ys_m(x_{2i-1})&=g_m(x_{2i-1})=-1+\delta_{2i-1},
\end{alignat*}
thus
\[
\xs(x_{2i})=\ys(x_{2i})=\tfrac{1}{2}(\xs+\ys)(x_{2i})=1-\delta_{2i}
\]
and
\[
\xs(x_{2i-1})=1-\delta_{2i-1}
\qquad\text{and }\qquad
\ys(x_{2i-1})=-1+\delta_{2i-1},
\]
whence  $\|\xs\|=\|\ys\|=\bigl\|\frac{1}{2}(\xs+\ys)\bigr\|=1$ and $\|\xs-\ys\|=2$
(here we use that $\|x_k\|\leq1$ for every $k\in\N$
by the second inequality in \eqref{eq: asympotically isomorphic to ell_1}).


\medskip
(b).
Letting $\xss$ and $\yss$ be any  weak$^\ast$ limit points in $\Xss$ of, respectively,
the sequences $(x_{2i-1})_{i=1}^\infty$ and $(x_{2i})_{i=1}^\infty$, one has $\|\xss\|=\|\yss\|=1$
because $\xss(\xs)=\yss(\xs)=1$ (we work with the elements $\xs$ and $\ys$ defined in the proof of (a)).
To see that the dual space $\Xs$ is not smooth, it suffices to observe that $\xss\not=\yss$
(because $\xss(\ys)=-1$ and $\yss(\ys)=1$, and thus $\|\xss-\yss\|=2$).
\end{proof}

For the sake of completeness,
we conclude by proving Proposition \ref{prop: X is OH => X contains an asympt. ell_1}
by a slightly simpler argument than that in \cite{ALNT}.

\begin{proof}[Proof of Proposition \ref{prop: X is OH => X contains an asympt. ell_1}]
Assume that $X$ is octahedral.
Let $(\delta_k)_{k=1}^\infty$ be a sequence in the interval~$(0,1)$.
It suffices to find a sequence $(x_k)_{k=1}^\infty$ in $S_X$ such that,
whenever $m\in\mathbb{N}$ and $\alpha_1,\dotsc,\alpha_m\in\R$, 
one has \eqref{eq: asympotically isomorphic to ell_1}.
To this end, pick real numbers $\eps_i>0$, $i=1,2,\dotsc$, so that
\[
\prod_{i=k}^m(1-\eps_i)>1-\delta_k
\quad\text{for all $m,k\in\N$ with $m\geq k$.}
\]
Since $X$ is octahedral, one can pick elements $x_k\in S_X$, $k=1,2,\dotsc$, such that
for every $m\in\N$ with $m\geq2$, every $x\in\linspan\{x_1,\dots,x_{m-1}\}$, and every $\alpha\in\R$, one has
\[
\|x+\alpha x_m\|
\geq(1-\eps_{m})\bigl(\|x\|+|\alpha|\bigr).
\]
The condition \eqref{eq: asympotically isomorphic to ell_1} holds
whenever $m\in\mathbb{N}$ and $\alpha_1,\dotsc,\alpha_m\in\R$.
\end{proof}

\providecommand{\bysame}{\leavevmode\hbox to3em{\hrulefill}\thinspace}
\providecommand{\MR}{\relax\ifhmode\unskip\space\fi MR }
\providecommand{\MRhref}[2]{%
  \href{http://www.ams.org/mathscinet-getitem?mr=#1}{#2}
}
\providecommand{\href}[2]{#2}


\begin{thebibliography}{10}

\bibitem{AHNTT}
T.~A. Abrahamsen, P. H\'{a}jek, O. Nygaard, J. Talponen, and  S.~Troyanski,
\emph{Diameter 2 properties and convexity},
Studia Math. \textbf{232} (2016), no.~3, 227--242. \MR{3499106}

\bibitem{ALNT}
T.~A. Abrahamsen, V.~Lima, O.~Nygaard, and S.~Troyanski,
\emph{Diameter two properties, convexity and smoothness}, Milan J. Math.
\textbf{84} (2016), no.~2, 231--242.
\MR{3574595}

\bibitem{AK}
F.~Albiac and N.~J.~Kalton,
\emph{Topics in {B}anach Space Theory},
Graduate Texts in Mathematics, vol. 233, Springer, New York, 2006.
\MR{2192298}

\bibitem{BG&L-P&RZ}
J.~Becerra~Guerrero, G.~L\'{o}pez-P\'{e}rez, and A.~Rueda~Zoca,
\emph{Octahedral norms and convex combination of slices in {B}anach spaces},
J. Funct. Anal. \textbf{266} (2014), no.~4, 2424--2435.
\MR{3150166}

\bibitem{DGZ}
R.~Deville, G.~Godefroy, and V.~Zizler,
\emph{Smoothness and Renormings in {B}anach Spaces},
Pitman Monographs and Surveys in Pure and Applied Mathematics, vol.~64,
Longman Scientific \& Technical, Harlow;
copublished in the United States with John Wiley \& Sons, Inc., New York, 1993.
\MR{1211634}

\bibitem{DGH}
S.~J. Dilworth, M.~Girardi, and J.~Hagler,
\emph{Dual {B}anach spaces which contain an isometric copy of {$L_1$}},
Bull. Polish Acad. Sci. Math. \textbf{48} (2000), no.~1, 1--12.
\MR{1751149}

\bibitem{G}
G.~Godefroy,
\emph{Metric characterization of first {B}aire class linear forms and octahedral norms},
Studia Math. \textbf{95} (1989), no.~1, 1--15.
\MR{1024271}




\bibitem{GMZ}
A.~J.~Guirao, V.~Montesinos, and V.~Zizler,
\emph{Renormings in {B}anach spaces---a toolbox}, 
Birkhäuser/Springer,
Cham, 2022. 
\MR{4501243}


\bibitem{HWW}
P.~Harmand, D.~Werner, and W.~Werner,
\emph{{$M$}-ideals in {B}anach spaces and {B}anach algebras},
Lecture Notes in Mathematics, vol. 1547, Springer-Verlag, Berlin, 1993.
\MR{1238713}

\bibitem{K}
V.~M.~Kadets,
\emph{Some remarks concerning the {D}augavet equation},
Quaestiones Math. \textbf{19} (1996), no.~1-2, 225--235.
\MR{1390483}

\bibitem{OST}
J.~Orihuela, R.~J.~Smith, and S.~Troyanski,
\emph{Strictly convex norms and topology},
Proc. Lond. Math. Soc. (3) \textbf{104} (2012), no.~1, 197--222.
\MR{2876968}

\end{thebibliography}
\end{document}